\newtheorem{thm}{Theorem}[section]
\newtheorem{cor}[thm]{Corollary}
\newtheorem{lem}[thm]{Lemma}
\newtheorem{conj}[thm]{Conjecture}
\newtheorem{prop}[thm]{Proposition}
\theoremstyle{definition}
\newtheorem{defn}[thm]{Definition}
\theoremstyle{remark}
\newtheorem{rem}[thm]{Remark}
\newtheorem{ex}[thm]{Example}
\numberwithin{equation}{section}
\begin{document}

\title[On the  Hamilton's isoperimetric ratio in...]
{On the Hamilton's isoperimetric ratio in complete  Riemannian manifolds of finite volume}

\author[S. Nardulli]{Stefano Nardulli}
\address{
Centro de Matem\'atica Computa\c{c}\~{a}o e Cogni\c{c}\~{a}o\endgraf
Universidade Federal do ABC \endgraf 
Santo Andr\'e, SP, Brazil \endgraf  
and\endgraf
Department of Mathematics\endgraf
Princeton University \endgraf
Fine Hall, Washington Road, Princeton, NJ 08544-1000, USA
}
\email{stefano.nardulli@ufabc.edu.br, stefanon@math.princeton.edu}

\author[F.G. Russo]{Francesco G. Russo}
\address{Department of Mathematics and Applied Mathematics\endgraf
University of Cape Town \endgraf
Private Bag X1, 7701, Rondebosch\endgraf
Cape Town, South Africa\endgraf
and\endgraf
Department of Mathematics and Applied Mathematics\endgraf
University of the Western Cape\endgraf 
Bellville, 7535, South Africa}
\email{francescog.russo@yahoo.com}

\subjclass[2010]{Primary:49Q20, 53C20; Secondary:53A10, 49Q10}
\keywords{Isoperimetric profile ; minimization ; Ricci flow ; Riemannian manifolds of finite volume ; finite perimeter convergence}
\date{\today}


\begin{abstract} 
We study a family of geometric variational functionals introduced by Hamilton, and considered later by Daskalopulos, Sesum, Del Pino and Hsu, in order to understand the behaviour of maximal solutions of the Ricci flow both in compact and  noncompact complete Riemannian manifolds of finite volume. The  case of dimension two has some peculiarities, which force us to use different ideas from the corresponding higher-dimensional case. Under some natural restrictions, we investigate sufficient and necessary conditions which allow us to show the existence of connected regions with a connected complementary set (the so-called ``separating regions''). In dimension higher than two, the associated problem of minimization is reduced to an auxiliary problem for the isoperimetric profile (with the corresponding investigation of the minimizers). This is possible via an argument of compactness in geometric measure theory valid for the case of complete finite volume manifolds. Moreover, we show that the minimum of the separating variational problem is achieved by an isoperimetric region. The dimension two requires different techniques of proof. The present results develop a definitive theory, which allows us to circumvent the shortening curve flow approach of the above mentioned authors at the cost of some applications of the geometric measure theory and of the Ascoli-Arzela’s Theorem.

\end{abstract}

\maketitle

\section{Introduction}

The papers \cite{hamilton3, hamilton4} have historically influenced the study of the Ricci flow on smooth Riemannian manifolds in the last 25 years. Recent advances can be found in \cite{hamilton1, hamilton2}, where Daskalopoulos and Hamilton investigate the behaviour of the maximal solutions of the Ricci flow over planes of finite volumes. Recent contributions in the same direction can also be found in \cite{hsu,  zheng}.
The original idea of Daskalopoulos and Hamilton was to introduce a series of isoperimetric ratios, which present some properties of monotonicity. These allow  to avoid singularities, which may appear at the extinction time of the Ricci flow. Again in \cite{hamilton1, hamilton2}, the authors assume the existence of minimizers for certain isoperimetric ratios, which correspond to the maximal solution of the 2--dimensional Ricci flow on a plane of finite volume. Our results deal with a proof of the existence of such minimizers in any dimension (possibly, higher than 2) under two sharp quantitative assumptions, which involve the isoperimetric profile function, allowing us to generalize \cite[Theorem 1.1]{hsu} in our Theorem \ref{ste5bis11Hsu} (note that for the reader convenience \cite[Theorem 1.1]{hsu} is reported integrally below as Theorem \ref{hsu1}). The main contributions of this work are:
\begin{enumerate}
\item In dimension three and higher, the isoperimetric problem with the separability constraint is equivalent to the one without the separability constraint.
\item In dimension two and higher, we discuss necessary and sufficient conditions, in order to show the existence of nontrivial minimizers. (See Theorems \ref{ste4}, \ref{ste61}, \ref{ste5bis}, \ref{ste4bis1}, \ref{ste5bis11}, \ref{ste5bis11Hsu} and Remark \ref{Rem:CounterExSte4}).
\end{enumerate}
The first general idea of our paper, treating the case of a Riemannian manifold $M$ of dimension $n+1\ge3$, is that the isoperimetric problem with the separability constraint is equivalent to the isoperimetric problem without the separability constraint. This equivalence holds only in dimension higher than $2$. Proposition \ref{Prop:EquivalenceInHigherDimensions0} shows the details. In dimension $2$ the equivalence fails to be true (compare Remark \ref{Rem:ComparisonWithHsu}) and different tools must be used, in order to generalize the results in literature. 

We have Theorems \ref{ste2}, \ref{ste3} \ref{ste4}, \ref{ste61}, \ref{ste5bis}, \ref{ste4bis1}, involving completely different techniques, which are proper of the geometric measure theory. Indeed, their proofs rely on an argument of compactness for finite perimeter sets in noncompact Riemannian manifolds of finite volume whose consequence is the continuity of the isoperimetric profile function. Corollaries \ref{compattezza} and \ref{continuity} are among  the new contributions that we offer on the topic of continuity and compactenss in the present context of study. To understand why  to prove continuity of the isoperimetric profile is an interesting result for itself, the reader can see also \cite{nardulli-pansu, papasoglu} in which it is shown by sophisticated examples that there exist complete Riemannian manifolds with discontinuous isoperimetric profile. Compactness for isoperimetric regions and continuity of the isoperimetric profile combined with the superadditivity property of the isoperimetric ratios (compare Lemmas \ref{algebraic},  \ref{connection}, \ref{algebraicchow}, \ref{algebraicchow1-} and  \ref{algebraicchow1}) provide the proofs of Theorems \ref{ste2}, \ref{ste3} \ref{ste4}, \ref{ste61}, \ref{ste5bis}, \ref{ste4bis1}.   
Similar arguments of compactness can be found in \cite{ritore3, morgan4, morgan1, morgan3, nardulli1, nardulli2, ritore1,  ritore2}; these contributions contain several theorems about compactness and regularity for the classical isoperimetric problem and turn out to be very powerful tools, once applied to the context of \cite{hamilton1, hamilton2}. 

The study of the variational problems associated to the functionals of Daskalopulos and Hamilton (see Definition \ref{hamilton}) is more difficult in dimension $2$ than in dimension $3$ or higher when separability constraints are involved and we described the details in Theorems \ref{ste61}, \ref{ste5bis},  \ref{ste4bis1}, \ref{ste5bis11}, \ref{ste5bis11Hsu}. We use in dimension $2$ a soft regularizing theorem; roughly speaking, we show that ``the limit of simple curves is simple'' in the variational problem that we consider.  We do it by showing that (under our assumptions) a minimizing sequence of separating simple curves lies inside a compact set; we show that we loose perimeter in the limit, if and only if, there is more than one connected component. Again the superadditivity of the isoperimetric ratios profiles play a crucial role in the arguments of the proofs.

Our approach is completely different from the one used in the proof of  \cite[Theorem 1.1]{hsu}, and should push the theory, of the Ricci flow in dimension $2$, to wider generalizations than the original framework of  Daskalopous, Hamilton, Sesum and Del Pino \cite{delpino1, delpino2, hamilton1, hamilton2, hamilton3, hamilton4}. Indeed Theorem \ref{ste5bis11} provides new proofs and new arguments even in the compact case, generalizing \cite{hamilton3} to the noncompact case with  different techniques via curve shortening flow. To conclude this part of the introduction we highlight that our approach permits to distillate the necessary and sufficient conditions to guarantee the existence of nontrivial  minimizers. This is among our main contributions.  

 Section 2 is devoted to illustrate some preliminaries, which are fundamental for the proofs of the main theorems of Sections 3 and 4. We offer a new proof of the continuity of the isoperimetric profile function  (see Section 2), by means of an argument contained in \cite{ritore1}. This result has independent interest and has an important role in the structure of our proofs in Section 4. The main results are in fact here and we solve a problem of minimization for the isoperimetric ratio in the sense of Hamilton (see \cite{hamilton1, hamilton2}). In dimension two we use a classical Ascoli--Arzela Theorem to get uniform convergence after reducing the problem to the compact case. This allows us to deduce the existence of simple continuous connected separating curves in the limit, that furthermore is the boundary of an isoperimetric region see Theorems \ref{ste5bis11}, \ref{ste5bis11Hsu}. 
  Section 4 contains the proofs of the  main results of the present paper. Finally, Section 5 contains examples in which  the assumptions of the main theorems are satisfied. These examples show the usefulness of replacing the original problem with our formulation.

\section{Preliminaries}

We introduce some terminology and notation which will be used in the rest of the paper. The symbol $M^{n+1}$ denotes an open connected set of a smooth complete ($n$+1)--dimensional Riemannian manifold.  In the rest of the paper, we will write briefly $M$, in order to denote $M^{n+1}$. For any measurable set $\Omega \subseteq M$ and any open set $U \subseteq M$ (here $n \ge 0$), $\mathrm{vol}(\Omega)$ is the ($n$+1)--dimensional Hausdorff measure of $\Omega$, $\mathcal{H}^k(\Omega)$ is the $k$-dimensional Hausdorff measure of $\Omega$ (here $k \ge 0$), and 
\[\mathcal{P}(\Omega, U)=\sup\left\{\int_\Omega \mathrm{div} \ Y \ \ d\mathcal{H}^{n+1} \ | \ \|Y\|_{\infty}=1 \right\}\]
is the \textit{perimeter of} $\Omega$ \textit{relative to} $U$, where $Y$ is a smooth vector field with compact support contained in $U$, $\mathrm{div} \ Y$ denotes the divergence of $Y$, and $\|Y\|_\infty$ is the supremum norm of $Y$. Briefly, we write $\mathcal{P}(\Omega)=\mathcal{P}(\Omega, M)$ and say that $\Omega$ has \textit{finite perimeter in } $U$ if $\mathrm{vol}(\Omega \cap U) < \infty$ and $\mathcal{P}(\Omega, U)<\infty$. As  well known, these are fundamental notions in geometric measure theory, introduced by Caccioppoli \cite{caccioppoli} (via the geometric perimeter), De Giorgi \cite{degiorgi} (via the heat semigroup), and recently adapted to the context of Riemannian manifolds in \cite{miranda}. We recall from \cite{afp} that for a finite perimeter set $\Omega \subseteq M$ and an open set $U  \subseteq M$,  the \textit{reduced boundary} $\partial^* \Omega$ is the boundary of $\Omega$ in the sense of \cite[Definition 3.54, P.154]{afp}  and De Giorgi  \cite[Theorem 3.59]{afp} shows that $\mathcal{P}(\Omega,U)=\mathcal{H}^n((\partial^* \Omega) \cap U )$. 
In particular, if $\partial \Omega$ is smooth and $U=M$, then $\partial^* \Omega=\partial \Omega$ and $\mathcal{P}(\Omega)=\mathcal{H}^n(\partial \Omega)$. This point is important for the notions which we introduce in Definition \ref{hamilton}. Since we use extensively the theory of finite perimeter sets, a little technical discussion is in order here. By classical results of geometric measure theory (see Proposition $12.19$ and Formula $(15.3)$ of \cite{maggi}) we know that if $E$ is a set of locally finite perimeter in $M$ and  $B(x,1)$ an open ball of $\mathbb{R}^{n+1}$ of center $x$, radius one and $\mathrm{vol}(B(x,1))=\omega_{n+1}$, then the support of the distributional gradient measure of the characteristic function $\chi_E$ is given by $\mathrm{supp}(\nabla\chi_E)=\{x\in M \ : \ \:0< \mathrm{vol}(E\cap B(x,r))<\mathrm{vol}(B(x,r)),\forall r>0\}\subseteq\partial E$.  Furthermore there exists an equivalent Borel set $F$  such that $\mathrm{supp}(\nabla\chi_F)=\partial F=\overline{\partial^*F}$, where $\partial^*F$ is the reduced boundary of $F$. It is not too hard to show that if $E$ has $C^1$-boundary, then $\partial^*E=\partial E$, where $\partial E$ is the topological boundary of $E$. De Giorgi's Structure Theorem  \cite[Theorem 15.9]{maggi} guarantees that for every set $E$ of locally finite perimeter, the Hausdorff measure (wrt to a given metric on $M$) satisfies the condition $\mathcal{H}^n(\partial^*E)=\mathcal{P}(E)$. Therefore we may consider all  locally finite perimeter sets  (in the present paper) satisfying  $\overline{\partial^*E}=\partial E$. 

Let's recall the notion of Hausdorff distance for metric spaces from \cite[\S VI.4]{chavel}. Given $A$ and $B$  nonempty subsets of a metric space $(X, d)$ with metric $d$, the Hausdorff distance $d_{\mathrm{H}}(A, B)$ is defined by
$$d_{\mathrm{H} }(A,B)=\max \left\{\,\sup _{a \in A}\inf _{b \in B}d(a,b),\, \sup _{b\in B}\inf _{a \in A} d(a,b)\,\right\},$$ 
and this is equivalent to consider
$$d_{\mathrm{H}}(A,B)=\inf\{\varepsilon \geq 0 \ | \ A \subseteq B_{\varepsilon }{\text{ and }}B\subseteq A_{\varepsilon }\},$$
where
$$A_{\varepsilon }=\bigcup_{a \in A}\{x\in X \ | \ d(x,a) \leq \varepsilon \},$$
is the set of all points within  $\varepsilon$ of the set $A$. Of course, $$ d_{\mathrm{H}}(A,B)=\sup_{x\in X}\left|\inf _{a\in A}d(x,a)-\inf_{b\in B}d(x,b)\right|=\sup_{x\in A\cup B}\left|\inf_{a\in A}d(x,a)-\inf_{b\in B}d(x,b)\right|,$$  where $d(x,A)$ denotes the distance from the point $x$  to the set $A$, that is,  $$d(x, A)=\inf\{d(x,a) \ : \ a \in A\}.$$

Another crucial notion is the \textit{local Hausdorff convergence}, which can be found in \cite{afp}. In other words,  if $F, F_{1}, F_{2}, \ldots $ are subsets of a Riemannian manifold $M$ with Riemannian metric $d_g$,  we say that $F_{j}$ converges locally to $F$  in the \textit{Hausdorff distance}  in $M$, if for each compact $K \subset M$  the distance  $d_\mathrm{H}(F_j \cap K, F \cap K)$  gets to zero for $j$ running to infinity. We refer to \cite{afp, chavel, morgan2, petersen} for  classical aspects of geometric measure theory and differential geometry. One of these is, for instance, the following notion. \textit{The isoperimetric profile} of $M$ is the function \[I_M  : V \in \ ]0, \mathrm{vol}(M)[ \ \longmapsto I_M(V) \in [0,\infty[\] defined by
\[I_M(V)=\inf\left\{\mathcal{P}(\Omega) \ | \ \Omega \subseteq M, \  \mathrm{vol}(\Omega)=V \right\},\] where the infimum is taken over all relatively compact open set $\Omega$ with smooth boundary.

It is good to mention here another  positive quantity, which modifies $I_M(V)$. Looking at \cite[Definition 5.79]{chow}, we recall that a smooth embedded closed (possibly disconnected) hypersurface $N \subset M$ \textit{separates} $M$, if $M-N$ has two connected components $M_1$ and $M_2$ such that $\partial M_1=\partial M_2=N$. With this notion in mind we define \[\tilde{I}_M(V)=\inf\left\{\mathcal{P}(\Omega) \ | \ \Omega \subseteq M, \  \mathrm{vol}(\Omega)=V, \  \partial \Omega \ \mathrm{is} \ \mathrm{smooth}, \ \partial \Omega \ \mathrm{separates} \ M \right\}.\] From \cite{afp, chavel, morgan2, petersen}, an \textit{isoperimetric region} in $M$ of volume $V \in \ ] 0, \mathrm{vol}(M)[$ is a set $\Omega \subseteq M$ such that $\mathrm{vol}(\Omega)=V$ and $\mathcal{P}(\Omega)=I_M(V)$. A \textit{minimizing sequence} of sets of volume $V$ is a sequence of sets of finite perimeter $\{\Omega_k\}_{k \in \mathbb{N}}$ such that $\mathrm{vol}(\Omega_k)=V$ for all $k \in \mathbb{N}$ and $\lim_{k \rightarrow \infty} \mathcal{P}(\Omega_k)=I_M(V)$.

The behaviour of a minimizing sequence for fixed volume was investigated in various contributions in the last years, but we concentrate on \cite{ritore3, morgan1, morgan3, nardulli1, nardulli2, ritore1, ritore2}, since we focus on a perspective of Riemannian geometry. The following result of Ritor\'e and Rosales \cite{ritore1} characterizes the existence of regions minimizing perimeter under a fixed volume constraint. The arguments overlap some techniques in \cite{morgan1, morgan3}.
 
 \begin{thm}[See \cite{ritore1}, Theorem 2.1]\label{rr} Let $M$ be a connected unbounded open set of a complete Riemannian manifold. For any minimizying sequence $\{\Omega_k\}_{k \in \mathbb{N}}$ of sets of volume $V$, there exist a finite perimeter set $\Omega \subset M$ and sequences of  finite perimeter sets $\{\Omega^c_k\}_{k \in \mathbb{N}}$ and $\{\Omega^d_k\}_{k \in \mathbb{N}}$ such that 
 \begin{itemize}
 \item[(i)]$\mathrm{vol}(\Omega) \le V$ and $\mathcal{P}(\Omega) \le I_M(V)$;
 \item[(ii)]$\mathrm{vol}(\Omega^c_k) + \mathrm{vol}(\Omega^d_k) =V$ and \[\lim_{k \rightarrow \infty}[\mathcal{P}(\Omega^c_k) + \mathcal{P}(\Omega^d_k)]=I_M(V);\]
  \item[(iii)]The sequence $\{\Omega^d_k\}_{k \in \mathbb{N}}$ diverges;
   \item[(iv)]Passing to a subsequence, we have that \[\lim_{k \rightarrow \infty} \mathrm{vol}(\Omega^c_k)=\mathrm{vol}(\Omega) \ and \ \lim_{k \rightarrow \infty} \mathcal{P}(\Omega^c_k)=\mathcal{P}(\Omega);\] 
    \item[(v)]$\Omega$ is an isoperimetric region (possibly empty) for the volume it encloses.
 \end{itemize}
\end{thm}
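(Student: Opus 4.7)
The plan is to carry out a concentration--compactness argument in the spirit of Lions, adapted to the Riemannian setting as in \cite{morgan1, morgan3}. I start by fixing an exhaustion $\{U_j\}_{j \in \mathbb{N}}$ of $M$ by relatively compact open sets with $\overline{U_j} \subset U_{j+1}$ and $M = \bigcup_j U_j$. Since $\mathcal{P}(\Omega_k)$ is bounded (it converges to $I_M(V)$) and $\mathrm{vol}(\Omega_k \cap U_j) \le \mathrm{vol}(U_j) < \infty$, the BV compactness theorem of \cite{afp} applied on each $U_j$, combined with a Cantor diagonal extraction, produces a finite perimeter set $\Omega \subseteq M$ with $\mathrm{vol}(\Omega) \le V$ such that $\chi_{\Omega_k} \to \chi_\Omega$ in $L^1_{\mathrm{loc}}(M)$ along a subsequence, still denoted $\{\Omega_k\}$. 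Lower semicontinuity of the perimeter then gives $\mathcal{P}(\Omega) \le \liminf_k \mathcal{P}(\Omega_k) = I_M(V)$, which settles (i).

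Next, I perform a slicing argument to split each $\Omega_k$ into a convergent piece and a diverging piece with almost additive perimeter. Fix $p_0 \in M$ and set $r(x) = d(x, p_0)$. The coarea formula applied to $\chi_{\Omega_k}$ yields
\[
\int_0^\infty \mathcal{H}^n(\Omega_k \cap \partial B(p_0, s)) \, ds \le \mathrm{vol}(\Omega_k) = V,
\]
so in every shell $\{T < r < T+1\}$ one can select a radius on which the slicing mass is small. A diagonal choice then furnishes radii $R_k \to \infty$ with $\mathcal{H}^n(\Omega_k \cap \partial B(p_0, R_k)) \to 0$ \emph{and} $\mathrm{vol}(\Omega_k \cap B(p_0, R_k)) \to \mathrm{vol}(\Omega)$. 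Setting $\Omega_k^c = \Omega_k \cap B(p_0, R_k)$ and $\Omega_k^d = \Omega_k \setminus B(p_0, R_k)$, the standard slicing identity
\[
\mathcal{P}(\Omega_k^c) + \mathcal{P}(\Omega_k^d) = \mathcal{P}(\Omega_k) + 2 \, \mathcal{H}^n(\Omega_k \cap \partial B(p_0, R_k))
\]
delivers (ii); assertion (iii) is then immediate, since $\Omega_k^d \cap B(p_0, R) = \emptyset$ for $k$ large, for every fixed $R$.

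I would establish (iv) and (v) jointly, using the continuity of the isoperimetric profile proved in Section 2. Lower semicontinuity gives $\mathcal{P}(\Omega) \le \liminf_k \mathcal{P}(\Omega_k^c)$, while the defining inequality $\mathcal{P}(\Omega_k^d) \ge I_M(\mathrm{vol}(\Omega_k^d))$ together with $\mathrm{vol}(\Omega_k^d) \to V - \mathrm{vol}(\Omega)$ and continuity of $I_M$ forces $\liminf_k \mathcal{P}(\Omega_k^d) \ge I_M(V - \mathrm{vol}(\Omega))$. Summing these and using (ii),
\[
\mathcal{P}(\Omega) + I_M(V - \mathrm{vol}(\Omega)) \le I_M(V).
\]
Conversely, the unboundedness of $M$ permits me to place two nearly optimal, compactly supported test sets of volumes $\mathrm{vol}(\Omega)$ and $V - \mathrm{vol}(\Omega)$ in disjoint open regions, giving the reverse subadditivity $I_M(V) \le I_M(\mathrm{vol}(\Omega)) + I_M(V - \mathrm{vol}(\Omega))$. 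Since $\mathcal{P}(\Omega) \ge I_M(\mathrm{vol}(\Omega))$, equality must hold throughout, which yields $\mathcal{P}(\Omega) = I_M(\mathrm{vol}(\Omega))$ (establishing (v)) and, on a further subsequence, $\lim_k \mathcal{P}(\Omega_k^c) = \mathcal{P}(\Omega)$ (establishing (iv)).

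The main technical obstacle is the joint selection of the radii $R_k$: they must grow to infinity slowly enough that $\Omega_k \cap B(p_0, R_k)$ captures precisely the converging mass $\mathrm{vol}(\Omega)$, and rapidly enough that the slicing error $\mathcal{H}^n(\Omega_k \cap \partial B(p_0, R_k))$ vanishes. Balancing the two demands a delicate diagonal coupling of the $L^1_{\mathrm{loc}}$ convergence on each $U_j$ with the coarea integral bound, and it is here that completeness of $M$ (making each $B(p_0, R)$ relatively compact) and the finiteness of $V$ are both essential.
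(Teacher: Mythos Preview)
The paper does not supply its own proof of this theorem: it is quoted verbatim from \cite{ritore1}, Theorem~2.1, and used as a black box. Your outline is in fact the concentration--compactness scheme of \cite{ritore1} (local BV compactness on an exhaustion, followed by a coarea slicing at carefully chosen radii), so in spirit you are reproducing the cited argument rather than offering an alternative.

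There is, however, a genuine logical circularity in your write-up. In the step establishing (iv) and (v) you invoke ``the continuity of the isoperimetric profile proved in Section~2''. But in this paper the continuity result is Corollary~\ref{continuity}, whose proof begins ``By Corollary~\ref{compattezza}\ldots'', and Corollary~\ref{compattezza} is itself deduced from Theorem~\ref{rr}. So you are using a consequence of the theorem to prove the theorem. The original argument in \cite{ritore1} avoids this: item (v) is obtained by a direct comparison---if some $\Omega'$ with $\mathrm{vol}(\Omega')=\mathrm{vol}(\Omega)$ had smaller perimeter, one replaces the convergent piece $\Omega_k^c$ by (a bounded approximation of) $\Omega'$, keeps the diverging piece $\Omega_k^d$, and contradicts $\lim_k \mathcal{P}(\Omega_k)=I_M(V)$. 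No continuity of $I_M$ is needed.

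A second soft spot is your subadditivity step $I_M(V)\le I_M(\mathrm{vol}(\Omega))+I_M(V-\mathrm{vol}(\Omega))$. The justification ``unboundedness of $M$ permits placing two nearly optimal test sets in disjoint open regions'' is not valid for a general Riemannian manifold: nearly optimal competitors for a given volume may all concentrate in one region of $M$, and nothing guarantees the existence of almost-minimizers supported far away. This inequality is simply not available in the generality stated, and \cite{ritore1} does not use it.
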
  

We will see in the proof of Theorem \ref{ste2} below that one of the consequences of Theorem \ref{rr} is the existence of isoperimetric regions for every volume in a Riemannian manifold of finite volume as already pointed out in Remark $2.3$ of \cite{ritore1}. We also note that the condition (iv) of Theorem \ref{rr} can be expressed by saying that $\{\Omega^c_k\}_{k \in \mathbb{N}}$ \textit{converges to} $\Omega$ \textit{in the finite perimeter sense} (see \cite[pp. 4601--4603]{ritore1} or \cite{afp} for a rigorous definition). A priori we note that $\mathrm{vol}(\Omega)$ may be strictly less than $V$ in (i) of Theorem \ref{rr}. A careful analysis of the proof Theorem \ref{rr} gives a significant result of compactness, when the ambient manifold is of finite volume. This is expressed by the following corollary. 
\begin{cor}[Compactness]\label{compattezza} Let $M$ be a complete Riemannian manifold of $\mathrm{vol}(M) < \infty$. Then for any sequence $\{\Omega_k\}_{k \in \mathbb{N}}$ of sets of finite perimeter such that $\mathrm{vol}(\Omega_k)\leq V$ and $\mathcal{P}(\Omega_k) \le A$ $($where $V, A$ are positive constants$)$, there exists a set $\Omega \subseteq M$ of finite perimeter and a subsequence again noted $\{\Omega_k\}_{k \in \mathbb{N}}$ such that $\{\Omega_k\}_{k \in \mathbb{N}}$ converges to $\Omega$ in $L^1(M)$. Moreover, if $\Omega_k$ is a minimizing sequence of volume $V\in]0,\mathrm{vol}(M)[$, then we also have that the convergence is in the sense of finite perimeter sets, i.e., $\mathcal{P}(\Omega_k)\to\mathcal{P}(\Omega)$.
\end{cor}
\begin{proof}
We apply the construction of the proof of Theorem $\ref{rr}$ to the sequence $\Omega_k$ even if this is not necessarily a minimizing sequence. The conclusions are the same mutatis mutandis as those of Theorem \ref{rr} (for the details one can see the proof of Theorem $1$ of \cite{localhoelder}). Namely there exist a  finite perimeter set $\Omega$, a subsequence denoted again by $\Omega_k$, and $0<\bar{V}\leq V$, $0<\bar{A}\leq A$ such that
\begin{itemize}
 \item[(I)]$\mathrm{vol}(\Omega)=V_1 \le\bar{V}$ and $\mathcal{P}(\Omega) \le \bar{A}$;
 \item[(II)]$\mathrm{vol}(\Omega^c_k) + \mathrm{vol}(\Omega^d_k) =\bar{V}$ and \[\lim_{k \rightarrow \infty}[\mathcal{P}(\Omega^c_k) + \mathcal{P}(\Omega^d_k)]=\bar{A};\]
  \item[(III)]The sequence $\{\Omega^d_k\}_{k \in \mathbb{N}}$ diverges;
   \item[(IV)]Passing to a subsequence, we have that \[\lim_{k \rightarrow \infty} \mathrm{vol}(\Omega^c_k)=\mathrm{vol}(\Omega)=V_1 \ and \ \lim_{k \rightarrow \infty} \mathcal{P}(\Omega^c_k)\ge\mathcal{P}(\Omega).\] 
 \end{itemize}

Thus  there is a splitting of the volume in the following form
\[\bar{V}=\mathrm{vol}(\Omega^c_k) + \mathrm{vol}(\Omega^d_k) = \lim_{k \rightarrow \infty }\mathrm{vol}(\Omega^c_k) + \lim_{k \rightarrow \infty}\mathrm{vol}(\Omega^d_k)=V_1 + V_2 ,\] where $V_1$ is the volume which is at finite distance from $\Omega$ and $V_2$ is the volume which is at "infinite" distance from $\Omega$. Assume that $V_2>0$. By construction  $\Omega^d_k=\Omega_k-B_{r(k+1)}$ where $p_0 \in M$, is fixed once at all, $B(p_0,r)$ is the open ball centered at $p_0$ of radius $r >0$,  $B_{r(k+1)}= M \cap B(p_0,r(k+1))$.  Such $\Omega^d_k$ turns out to be a sequence that lies outside every fixed compact $K$ inside  $M$. The details  of this construction can be found at \cite[pp. 4604--4606]{ritore1}. Then it must be $\mathrm{vol}(\Omega^d_k) \le \mathrm{vol}(M-B_{r(k+1)})$. Passing through the limit, 
\[0 \le \lim_{k \rightarrow \infty}\mathrm{vol}(\Omega^d_k) \le \lim_{k \rightarrow \infty} \mathrm{vol}(M\setminus B_{r(k+1)})=0\]
and so $V_2=\lim_{k \rightarrow \infty}\mathrm{vol}(\Omega^d_k)=0$. This gives the desired contradiction. Therefore $V_2=0$, hence $V_1=\bar{V}$ and the result follows readily from (IV). 
\end{proof}

Another interesting corollary of Theorem \ref{rr} is related with the continuity of the isoperimetric profile. In order to prove this second consequence, we recall a technical lemma from \cite{ritore3}.


The continuity of the isoperimetric profile is shown below.

\begin{cor}[Continuity of $I_M$]\label{continuity}
Let $M$ be a connected unbounded open set of a complete Riemannian manifold of finite volume. Then $I_M(V)$ is continuous. 
\end{cor}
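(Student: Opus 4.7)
The plan is to establish upper and lower semicontinuity of $I_M$ at each $V_0 \in (0, \mathrm{vol}(M))$ separately, using Lemma \ref{deformation} on near-minimizers together with subadditivity of perimeter on essentially disjoint unions.

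For upper semicontinuity, fix $\epsilon>0$ and pick a finite perimeter set $\Omega_0$ with $\mathrm{vol}(\Omega_0)=V_0$ and $\mathcal{P}(\Omega_0)<I_M(V_0)+\epsilon$. For small $\delta>0$, Lemma \ref{deformation} applied to $\Omega_0$ produces $\tilde{\Omega}_r \supseteq \Omega_0$ of volume $V_0+\delta$ with $\mathcal{P}(\tilde{\Omega}_r - \Omega_0) \le C_{\Omega_0}\delta$; since $\Omega_0$ and $\tilde{\Omega}_r-\Omega_0$ are essentially disjoint, the submodularity inequality $\mathcal{P}(A \cup B)+\mathcal{P}(A\cap B)\le \mathcal{P}(A)+\mathcal{P}(B)$ yields $I_M(V_0+\delta)\le I_M(V_0)+\epsilon+C_{\Omega_0}\delta$. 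Applying the same lemma to the complement $M - \Omega_0$ (which is also of finite perimeter, equal to $\mathcal{P}(\Omega_0)$) enlarges the complement by volume $\delta$, equivalently shrinks $\Omega_0$ to a set of volume $V_0-\delta$, with the symmetric bound $I_M(V_0-\delta)\le I_M(V_0)+\epsilon+C_{M-\Omega_0}\delta$. Letting $\delta\to 0^+$ and then $\epsilon\to 0^+$ gives $\limsup_{V\to V_0} I_M(V)\le I_M(V_0)$.

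For lower semicontinuity, pick $V_n\to V_0$ along which $I_M(V_n)\to L := \liminf_{V\to V_0}I_M(V)$ and, for each $n$, a near-minimizer $\Omega_n$ of volume $V_n$ with $\mathcal{P}(\Omega_n)\le I_M(V_n)+1/n$. Deforming each $\Omega_n$ to volume exactly $V_0$ via Lemma \ref{deformation} (applied to $\Omega_n$ if $V_n<V_0$ and to $M-\Omega_n$ if $V_n>V_0$) produces $\tilde{\Omega}_n$ with $\mathcal{P}(\tilde{\Omega}_n)\le \mathcal{P}(\Omega_n)+C_n|V_n-V_0|$, whence $I_M(V_0)\le I_M(V_n)+1/n+C_n|V_n-V_0|$ and the desired $I_M(V_0)\le L$ follows once $C_n|V_n-V_0|\to 0$. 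The main obstacle is precisely this uniformity, since the constant $C_n = C_{\Omega_n}$ furnished by Lemma \ref{deformation} a priori depends on $\Omega_n$. I would address it by inspecting the construction in \cite{ritore3}, where $\tilde{\Omega}_r$ is the image of $\Omega$ under the flow of a fixed smooth compactly supported vector field and $C_\Omega$ reduces to the Lipschitz and divergence norms of that field on a neighborhood of the reduced boundary $\partial^*\Omega$; then choosing a single such vector field supported in a common compact region transverse to $\partial^*\Omega_n$ for every $n$, with a uniform lower bound on the normal component, gives $C_n \le C$ independent of $n$, so $C_n|V_n-V_0|\to 0$ and the argument closes.
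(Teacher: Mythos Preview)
Your upper semicontinuity argument is essentially the paper's: fix a (near-)minimizer at $V_0$ and apply Lemma~\ref{deformation} to that single set, so that the constant $C_{\Omega_0}$ is independent of the target volume. The paper does exactly this (assuming an actual isoperimetric region at $V_0$); your use of a near-minimizer and the separate treatment of $V<V_0$ via the complement are reasonable refinements of the same idea.

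The lower semicontinuity argument, however, has a genuine gap that your proposed fix does not close. You apply Lemma~\ref{deformation} to a \emph{different} set $\Omega_n$ at each step, so the constants $C_n=C_{\Omega_n}$ are a priori unbounded, and you correctly identify that $C_n|V_n-V_0|\to 0$ is the crux. But your remedy of using a single compactly supported vector field $X$ for all $n$ cannot work as stated: the flow of $X$ changes $\mathrm{vol}(\Omega_n)$ at rate $\int_{\Omega_n}\mathrm{div}\,X$, and nothing prevents this from vanishing. If the support of $X$ lies entirely inside $\Omega_n$ (or entirely in its complement) the volume does not move at all, and since the $\Omega_n$ are arbitrary near-minimizers with no a priori relation to one another, you cannot choose a field ``transverse to $\partial^*\Omega_n$ for every $n$'' with a uniform lower bound on the normal component. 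No common compact region is guaranteed to meet all the reduced boundaries nontrivially.

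The paper sidesteps this issue entirely by taking a different route for lower semicontinuity: rather than deforming each $\Omega_n$ to volume $V_0$, it invokes the compactness result of Corollary~\ref{compattezza} to extract a subsequence converging in the finite perimeter sense to a limit $\Omega$ with $\mathrm{vol}(\Omega)=V_0$, and then applies lower semicontinuity of the perimeter functional to get $I_M(V_0)\le\mathcal{P}(\Omega)\le\liminf\mathcal{P}(\Omega_n)$. No deformation constants enter at all. This is the idea you are missing; the deformation lemma is used only for the upper semicontinuous half.
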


\begin{proof}
Consider a sequence of volumes $V_i$ such that $V=\lim_{i \rightarrow \infty}V_i$. By Corollary \ref{compattezza}, we have that 
\begin{equation}
I_M(V)\leq\mathcal{P}(\Omega) \le  \liminf_{i \rightarrow \infty}\mathcal{P}(\Omega_i)=\liminf_{i \rightarrow \infty} I_M(V_i),
\end{equation}
where $\Omega_i$ is an isoperimetric region of $\mathrm{vol}(\Omega_i)=V_i$ and $\mathrm{vol}(\Omega)=V$ such that $\Omega_i$ converges to $\Omega$ in $L^1(M)$. In principle $\Omega$ is not necessarily an open bounded set with smooth boundary, but the first inequality is still true and \cite[Theorem 1]{localhoelder} shows that there is continuity of the isoperimetric profile. This allows us to conclude  the lower semicontinuity of $I_M(V)$. In order to show the upper semicontinuity, we need to prove that
\[I_M(V)=\mathcal{P}(\Omega) \ge  \limsup_{i \rightarrow \infty}\mathcal{P}(\Omega_i)=\limsup_{i \rightarrow \infty} I_M(V_i),\] where $\Omega$ is an isoperimetric region of volume $V$ and $\Omega_i$ is a suitable set approximating $\Omega$, but this follows from  \cite[Corollary 1, Theorem 2]{localhoelder}. 
\end{proof}

Corollary \ref{continuity} does not follows from \cite{localhoelder} because  it is proved there the continuity and  the H\"older continuity of the isoperimetric profile in Riemannian manifolds having Ricci curvature bounded below and volume or balls of a fixed radius bounded below (uniformly with respect to their centers). Manifolds of this kind are of infinite volume. So they are quite far of being the class of manifold with finite volume that we are considering in the preset paper with respect to the Hamilton isoperimetric ratios. Another difference (between the proof of Corollary \ref{continuity} and the arguments in \cite{localhoelder}) is in the way we prove the lower semicontinuity of the isoperimetric profile function. Here the lower semicontinuity is an immediate consequence of our compactness Corollary \ref{compattezza}. On the other hand, the basic idea of the proof of H\"older continuity in  \cite{localhoelder} is robust enough to be adapted to the context of manifolds with finite volume, that is, here, but we preferred to give an alternative argument because we think that it is more appropriate to the context we are studying and in some respect quite new. 

The basic regularity properties of the boundary of isoperimetric regions are stated below.

\begin{thm}[See \cite{ritore1}, Proposition 2.4] \label{regularity}
Let $\Omega$ be an isoperimetric region in a connected open set $M$ of smooth boundary $\partial M$. Then  $\overline{\partial \Omega \cap  M}=\Sigma_r \mathring{\cup} \Sigma_s $, where $\Sigma_r$ is the regular part of $\overline{\partial \Omega \cap  M}$  and $\Sigma_s$ is the singular part of $\overline{\partial \Omega \cap  M}$.  Moreover 
\begin{itemize}
\item[(i)]$\Sigma_r \cap  M$ is a smooth embedded hypersurface with constant mean curvature; 
\item[(ii)]if $\overline{\partial \Omega \cap  M} \cap \partial M \neq \emptyset$, then $\overline{\partial \Omega \cap  M}$ meets $\partial M$  orthogonally;
\item[(iii)]$\Sigma_s$ is a closed set of Hausdorff dimension at most $n-7$.
\end{itemize}
\end{thm}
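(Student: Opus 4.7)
The strategy is to reduce the constrained isoperimetric problem to a standard almost-minimizing problem for perimeter and then invoke the classical interior and boundary regularity theory of geometric measure theory.

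First I would establish that $\Omega$ is an $(\mathbf{M},\xi,\delta)$-minimizer (in Almgren's sense) of the perimeter functional. The idea is the following: given any ball $B$ of small radius centred at a point of $\partial^*\Omega\cap M$ and any competitor $\Omega'$ with $\Omega'\triangle\Omega\Subset B$, the sets $\Omega$ and $\Omega'$ generally do not have the same volume, so $\Omega'$ is not directly admissible. One fixes this by choosing an auxiliary reference ball $B_0$ disjoint from $B$ where $\partial^*\Omega$ is non-degenerate, and redistributing the volume $\mathrm{vol}(\Omega')-\mathrm{vol}(\Omega)$ there by a smooth one-parameter deformation with compact support. The change of perimeter produced by this correction is controlled by $C\,|\mathrm{vol}(\Omega')-\mathrm{vol}(\Omega)|\le C\,\mathrm{vol}(B)$, yielding the almost-minimizing inequality
\[\mathcal{P}(\Omega,B)\le \mathcal{P}(\Omega',B)+C\,r^{n+1},\]
so that $\Omega$ is a perimeter almost-minimizer with a H\"older-type deviation.

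Next, for the interior part \textbf{(i)}, I would apply the Allard--De Giorgi--Almgren regularity theorem to this almost-minimizer: $\Sigma_r\cap M$ is a $C^{1,\alpha}$ (in fact smooth, by Schauder bootstrapping from the Euler--Lagrange equation) embedded hypersurface. Writing down the first variation of $\mathcal{P}$ under a volume-preserving deformation produces a Lagrange multiplier equal to the mean curvature, which must therefore be constant on $\Sigma_r\cap M$. For the boundary behaviour \textbf{(ii)}, I would use compactly supported test vector fields that are only tangent to $\partial M$ (not vanishing there); the first variation formula on such fields picks up a boundary term whose vanishing is equivalent to the free boundary $\overline{\partial\Omega\cap M}$ meeting $\partial M$ orthogonally. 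The corresponding regularity up to $\partial M$ can be obtained either by reflection across $\partial M$ (reducing to the interior case after a local straightening of $\partial M$) or by invoking Gr\"uter--Jost-type boundary regularity for almost-minimizers with a free-boundary condition.

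Finally, for \textbf{(iii)}, the dimension estimate on the singular set $\Sigma_s$ follows from Federer's dimension-reduction argument applied to the rectifiable varifold $\partial^*\Omega$: any blow-up at a singular point is a perimeter-minimizing cone in a tangent half-space (or full space, in the interior case), and the Simons--Bombieri--De Giorgi--Giusti theorem rules out non-planar minimizing cones in ambient dimension $\le 7$. The main technical obstacle in the whole argument is the quantitative volume-fixing construction used to put $\Omega$ in the almost-minimizing class; once this is in place, the remainder is a direct transcription of the standard interior and free-boundary regularity theory from Euclidean space to the Riemannian setting, exactly as carried out in \cite{ritore1}.
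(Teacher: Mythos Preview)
Your outline is the standard route to this regularity statement and is essentially what is carried out in \cite{ritore1}; note, however, that the present paper does not give its own proof of this theorem at all: it is stated with the attribution ``See \cite{ritore1}, Proposition 2.4'' and used as a black box. So there is nothing in the paper to compare your argument against beyond the citation itself, and your sketch matches the approach of the cited source.
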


By Theorem \ref{regularity} (iii),  the isoperimetric regions have smooth boundary  in the low dimensional cases.

\section{Isoperimetric ratio in the sense of Hamilton}

In the present section we consider only complete manifolds of finite volume.
We introduce some  terminology, which can be found  in \cite{chow}, but also some new functionals for the proofs of our main results. 

\begin{defn}\label{hamilton} Let $M$ be a complete Riemannian manifold with $\mathrm{vol}(M) < \infty$. If  $N \subseteq M$ is a smooth embedded closed hypersurface which separates $M$,  we define the isoperimetric ratio 
\[I(N)=  \mathcal{P}(N)^{n+1} \cdot {\left(\frac{1}{\mathrm{vol}(M_1)}+\frac{1}{\mathrm{vol}(M_2)}\right)}^n\]
and 
\[C(N)=  \mathcal{P}(N) \cdot \left(\frac{1}{\mathrm{vol}(M_1)}+\frac{1}{\mathrm{vol}(M_2)}\right).\]
If $H \subset M$ is a smooth embedded closed (possibly disconnected) hypersurface which is the boundary of an open region $R$, we define
\[J(H)=  \mathcal{P}(H)^{n+1} \cdot {\left(\frac{1}{\mathrm{vol}(R)}+\frac{1}{\mathrm{vol}(M)-\mathrm{vol}(R)}\right)}^n\]
and
\[D(H)=  \mathcal{P}(H)\cdot \left(\frac{1}{\mathrm{vol}(R)}+\frac{1}{\mathrm{vol}(M)-\mathrm{vol}(R)}\right).\]
By default, we get four isoperimetric constants
\[I= \inf \{I(N) \ | \ N \ \mathrm{separates} \ M\} , \ \ \  \  C=\inf \{C(N) \ | \ N \ \mathrm{separates} \ M\}, \]
\[J= \inf \{J(H) \ | \ H \  \mathrm{is} \  \mathrm{smooth} \ \}, \ \  \ \ D=\inf \{D(H) \ | \ H \ \mathrm{is} \  \mathrm{smooth} \ \},\]
and  four functionals
\[\tilde{I}^*_M(V)={(\tilde{I}_M(V))}^{n+1} \cdot {\left(\frac{1}{V}+\frac{1}{\mathrm{vol}(M)-V}\right)}^n, \]
\[\tilde{I}^\sharp_M(V)=\tilde{I}_M(V) \cdot \left(\frac{1}{V}+\frac{1}{\mathrm{vol}(M)-V}\right), \]
\[I^*_M(V)={(I_M(V))}^{n+1} \cdot {\left(\frac{1}{V}+\frac{1}{\mathrm{vol}(M)-V}\right)}^n ,\]
\[I^\flat_M(V)= I_M(V) \cdot \left(\frac{1}{V}+\frac{1}{\mathrm{vol}(M)-V}\right) ,\]
which lead to the isoperimetric constants 
\[\tilde{I}^*= \inf \{\tilde{I}^*_M(V) \ | \ V \  \in \ ]0,\mathrm{vol}(M)[ \  \}, \] 
\[\tilde{I}^\sharp= \inf \{\tilde{I}^\sharp_M(V) \ | \ V \  \in \ ]0,\mathrm{vol}(M)[ \  \}, \] 
\[I^*= \inf \{I^*_M(V) \ | \ V \  \in \ ]0, \mathrm{vol}(M)[ \  \}, \] 
\[I^\flat= \inf \{I^\flat_M(V) \ | \ V \  \in \ ]0, \mathrm{vol}(M)[ \  \}. \] 
\end{defn}

\medskip
\medskip
\medskip
\medskip

In particular, if  $(M,g)$ is isometric to  $(\mathbb{R}^2, g)$ with a complete Riemannian metric $g$, we may specialize  $I(N)$, writing $n=1$ and $N=\gamma$, which turns out to be a   closed simple curve of $\mathbb{R}^2$ of length $L(\gamma)$, and let $A_1(\gamma)$  and $A_2(\gamma)$ denote the areas of the regions inside and outside $\gamma$ respectively. In this way, we get $I( \gamma )= L(\gamma)^2 \ \cdot (1/A_1(\gamma) +  1/A_2(\gamma))$ and $I=\inf \{ I(\gamma) \ | \ \gamma \ \mathrm{separates} \ \mathbb{R}^2 \}$. This special case presents some peculiarities and was studied in \cite{hamilton1, hamilton2}. We will focus on it  in Examples \ref{optimal} and \ref{optimalbis}.

Now we begin to analyze some problems of minimization of  the functionals $I(N)$, $C(N)$, $J(H)$ and $D(H)$,  in Definition \ref{hamilton}. These are not all equivalent, mainly for two reasons. A first reason is of topological nature. When we go to minimize over separating hypersurfaces, the topology and the metric of the manifold  influence strongly our arguments of proof. A second reason is due to the analytic expressions of $I(N)$, $C(N)$, $J(H)$ and $D(H)$. For instance, we note that the multiplicative factor is linear only in $C(N)$ and $D(H)$, while this is no longer true in $I(N)$ and $J(H)$. This gives complications and forces us to use some different techniques  of proof. The first case  concerns $J(H)$; this is an easy observation.

 \begin{rem}\label{ste1} Let $M$ be a complete Riemannian manifold of dimension $\geq 2$ of finite volume. With the notations of Definition \ref{hamilton}, with a suitable decay of the area of big geodesic balls we have that $J=I^*=0$. In fact, we evaluate $J(\partial B(p,r))$, where $B(p,r)$ is the geodesic ball centered at $p \in M$ of radius $r >0$. Now, on one hand, $\lim_{r \rightarrow \infty} \mathrm{vol}(B(p,r))=\mathrm{vol}(M)$, but on another hand from the coarea formula (see \cite[Theorem VIII.3.3]{chavel}) we have that 
  $$\liminf_{r \rightarrow \infty} \mathcal{P}(\partial B(p,r))=0.$$ To see this it is enough to note that $V(M)=\int_0^{+\infty}\mathcal{P}(\partial B(p,s))ds<+\infty$.  Now we can easily conclude that 
 \begin{eqnarray*}
 J & \le & \lim_{r \rightarrow \infty}J(\partial B(p,r))\\
  & = & \lim_{r \rightarrow \infty}(\mathcal{P}(\partial B(p,r)))^{n+1} \cdot {\left(\frac{1}{\mathrm{vol}(B(p,r))}+\frac{1}{\mathrm{vol}(M)-\mathrm{vol}(B(p,r))}\right)}^n\\
  & = & \lim_{r \rightarrow \infty} \frac{(\mathcal{P}(\partial B(p,r)))^{n+1}}{{(\mathrm{vol}(M)-\mathrm{vol}(B(p,r)))}^n} \cdot {\left(\frac{\mathrm{vol}(M)-\mathrm{vol}( B(p,r))}{\mathrm{vol}( B(p,r))}+1\right)}^n\\      
 & = & \lim_{r \rightarrow \infty} \frac{(\mathcal{P}(\partial B(p,r)))^{n+1}}{{(\mathrm{vol}(M)-\mathrm{vol}(B(p,r)))}^n}=0,
 \end{eqnarray*}
provided $\mathcal{P}(\partial B(p,r))\sim\frac1{r^{1+\varepsilon}}$. The same computation shows that for the metrics considered in \cite[Theorem 1.1 and Proposition 1.2]{hsu} the isoperimetric ratios $I$ and $J$ are equal to zero. In fact with the same notations of   Theorem \ref{hsu1} below, set $A(0_{\mathbb{R}^2},r)$ be the coordinate ball $\{x\in\mathbb{R}^2:|x| \le r\}$ we have that 
\begin{eqnarray*}
J & \le & \lim_{r \rightarrow \infty}J(\partial A(0_{\mathbb{R}^2},r))\\
   & = & \lim_{r \rightarrow \infty}\lambda_2(r)^{\frac{n+1}2} \cdot {\left(\frac{1}{\mathrm{vol}(A(0_{\mathbb{R}^2},r))}+\frac{1}{\mathrm{vol}(M)-\mathrm{vol}(A(0_{\mathbb{R}^2},r))}\right)}^n\\
 & = & \lim_{r \rightarrow \infty} \frac{\lambda_2(r)^{\frac{n+1}2}}{{(\mathrm{vol}(M)-\mathrm{vol}(A(0_{\mathbb{R}^2},r)))}^n} \cdot {\left(\frac{\mathrm{vol}(M)-\mathrm{vol}(A(0_{\mathbb{R}^2},r))}{\mathrm{vol}(A(p,r))}+1\right)}^n\\      
& = & \lim_{r \rightarrow \infty} \frac{\lambda_2(r)^{\frac{n+1}2}}{{(\mathrm{vol}(M)-\mathrm{vol}(A(0_{\mathbb{R}^2},r)))}^n}\le \lim_{r \rightarrow \infty}\frac{\lambda_2(r)^{\frac{n+1}2}}{\left(\int_r^{+\infty}\sqrt{\lambda_1(r)}dr\right)^n}\\
 & \le & \lim_{r \rightarrow \infty}\frac{\lambda_1(c_0r)^{\frac{n+1}2}}{\delta c_0^nr^n\lambda_1(r+\theta)^{\frac n2}}\le\lim_{r \rightarrow \infty}\frac{\lambda_1(c_0r)^{\frac12}}{\delta c_0^nr^n}=0,
 \end{eqnarray*}
with $r<r+\theta<c_0r$. Note that the inequality just before the final one follows from the assumption \eqref{Eq:Hsu(1.5)} of Theorem \ref{hsu1} below. A fortiori $J=0$. On the other hand the definitions show that  $0 \le I^* \le J$ and so $I^*=J=0$. 
This justifies the fact of considering the isoperimetric ratios $C$ and $D$ in the paper \cite{hsu} instead of the isoperimetric ratios $I$ and $J$ in dimension $2$ (i.e., with $n=1$) considered earlier by Hamilton in the case of compact manifolds. 
\end{rem} 
In the next proposition we show the equivalence of the isoperimetric ratios variational problem under investigation in dimension $3$ or higher.      
      
\begin{prop}\label{Prop:EquivalenceInHigherDimensions0}
Any Riemannian manifold $M$ of dimension $n + 1 \ge 3$ and of  finite volume satisfies $J=I$ and $C=D$.
\end{prop}

\begin{proof} By Definition \ref{hamilton}, we get  $J\le I$.
Suppose that one has a minimizing sequence (i.e., $\mathcal{P}(\Omega_j)\to J$) of connected $\Omega_j\subset M$ for every $j$, in dimension dimension  $n+1 \ge 3$ and assume that $M\setminus\Omega_j$ is not connected (See Fig. 1) then one can consider the construction of a separating competitor schematically represented in Fig. 2. 

\bigskip

\begin{figure}[h!]
 \includegraphics[scale=0.8]{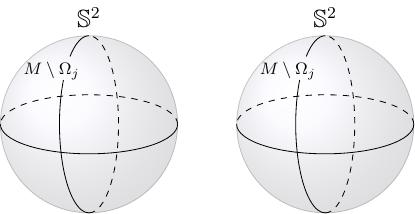}
    \caption{An example in dimension $n+1\ge3$ of $\Omega_j$ connected and $M\setminus\Omega_j$ disconnected as in the proof of Proposition \ref{Prop:EquivalenceInHigherDimensions0}.}
\end{figure}

\bigskip

\begin{figure}[h!]
 \includegraphics[scale=1]{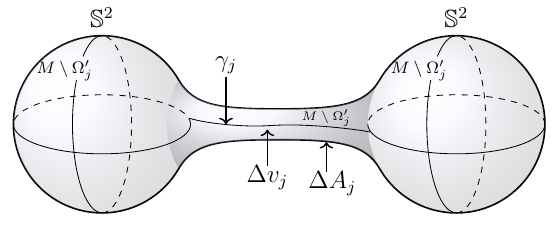}
    \caption{An example in dimension $n+1\ge3$ of the competitor $\Omega'_j$ constructed from $\Omega_j$ in the proof of Proposition \ref{Prop:EquivalenceInHigherDimensions0}.}
\end{figure}

Take a curve $\gamma_j:[0,1]\to M$ such that $\gamma_j(]0,1[)\subset\Omega_j$ joining two distinct points in the boundary of $\gamma_j(0),\gamma_j(1)\in\partial\Omega_j$, $\gamma_j(0)\ne\gamma_j(1)$, then take a small tubular neighborhood of $\gamma_j$ of small area $\Delta A_j\to0$ and small volume $\Delta V_j\to0$ and glue it smoothly to $\partial\Omega_j$ and one obtain a connected competitor $\Omega'_j$, such that $M\setminus\Omega_j$ is connected and $|\mathcal{P}(\Omega_j)-\mathcal{P}(\Omega'_j)|\to0$. So $J=\lim_{j\to+\infty}\mathcal{P}(\Omega'_j)\ge I$. To prove that $C=D$ one can proceed analogously.
\end{proof}

What can we say about $\tilde{I}^\sharp$ and $C$ ? The previous argument of Remark \ref{ste1} cannot be applied and  we are in need of a new proof.

\begin{thm}\label{ste2}
Let $M$ be a complete Riemannian manifold of dimension $\geq 2$ of finite volume. With the notations of Definition \ref{hamilton}, we have that $I^\flat \le \tilde{I}^\sharp \le C$. 
\end{thm}

\begin{proof}From Theorem \ref{rr},  for every volume $V$ there always exists an isoperimetric region $\Omega$ with $\mathrm{vol}(\Omega)=V$.   Theorem \ref{regularity} implies that $\partial\Omega$ is  smooth in low dimensions (i.e. $n +1 \le 7$). In higher dimensions there is a sequence $\{\Omega_i\}_{i \in \mathbb{N}}$ of regions of finite perimeter with smooth boundaries $\{\partial \Omega_i\}_{i \in \mathbb{N}}$   converging to $\partial \Omega$ (see \cite[Proposition 1.4]{miranda}).    Now if  $N \subseteq M$ is a smooth embedded closed (possibly disconnected) hypersurface which separates $M$,  then
\[C(N)  =  \mathcal{P}(N)\cdot {\left(\frac{1}{\mathrm{vol}(M_1)}+\frac{1}{\mathrm{vol}(M_2)}\right)}\] 
\[ \geq  {\tilde{I}_M(V)} \cdot \left(\frac{1}{V}+\frac{1}{\mathrm{vol}(M)-V}\right)=\tilde{I}^\sharp_M(V) \geq I_M(V) \cdot \left(\frac{1}{V}+\frac{1}{\mathrm{vol}(M)-V}\right)=I^\flat_M(V).\]
Passing through the infimums, we get
\[C=\inf\{C(N) \ | \ N \ \mathrm{separates} \ M\}\]
\[ \ge \inf\{\tilde{I}^\sharp_M(V) \ | \ V \ \in \ ]0, \mathrm{vol}(M)[ \ \}=\tilde{I}^\sharp \ge I^\flat.\]
\end{proof}

A priori $I^\flat$ may be zero or not. We will give more details on this point in the next section. Now we prove a similar result  for $I^*$, $\tilde{I}^*$ and $I$. 

\begin{thm}\label{ste3}
Let $M$ be a complete Riemannian manifold of dimension $\geq 2$ of finite volume. With the notations of Definition \ref{hamilton}, we have that $I^* \le \tilde{I}^* \le I$. 
\end{thm}

\begin{proof}We may argue as in Theorem \ref{ste2}, in order to show $I^* \le \tilde{I}^* \le I$. 
\end{proof}

A final observation concerns $D$ and it is  easy to check.

\begin{rem}\label{d} With the notations of Definition \ref{hamilton}, we have $I^\flat \le D \le C$.
\end{rem}

\section{Minimization problems }

An interesting question is to know whether the inequalities in Theorems \ref{ste2} and \ref{ste3} become equalities or not. An answer to this question may depend on the topology of the ambient manifold $M$ and on the dimension of $M$. We will investigate such aspects in the present section,  beginning with two useful lemmas which provide information on the number of connected components of the regions whose boundary minimize $C$ (in the sense of Definition \ref{hamilton}).  We apply an argument of algebraic nature, which is inspired by  \cite[Lemma 5.86 ]{chow}, and could be found also in \cite[Lemma 2.7]{hsu}

\begin{lem}[First Condition of Superadditivity]\label{algebraic}
For every positive real numbers $L_1$, $L_2$, $A_1$, $A_2$, $A_3$, we get
\[(L_1+L_2)\left(\frac{1}{A_1+A_2}+\frac{1}{A_3}\right)>\min\left\{L_1\left(\frac{1}{A_1}+\frac{1}{A_2+A_3}\right), L_2\left(\frac{1}{A_2}+\frac{1}{A_1+A_3}\right)\right\}.\]
\end{lem}

\begin{proof} By contradiction,  assume that $L_1, L_2, A_1, A_2, A_3 $ satisfy
\[ (L_1+L_2)\left(\frac{1}{A_1+A_2}+\frac{1}{A_3}\right)\leq L_1\left(\frac{1}{A_1}+\frac{1}{A_2+A_3}\right),\]
\[ (L_1+L_2)\left(\frac{1}{A_1+A_2}+\frac{1}{A_3}\right)\leq L_2\left(\frac{1}{A_2}+\frac{1}{A_1+A_3}\right).\]
We rewrite the two preceding inequalities respectively as
\[\frac{A_1(A_2+A_3)}{A_3(A_1+A_2)}\leq\frac{L_1}{L_1+L_2} \ \ \mathrm{and} \ \ \frac{A_2(A_1+A_3)}{A_3(A_1+A_2)}\leq\frac{L_2}{L_1+L_2}.\]
Summing up these  two last inequalities, we get
\[\frac{A_3(A_1+A_2)+2A_1A_2}{A_3(A_1+A_2)}=1+\frac{2A_1A_2}{A_3(A_1+A_2)}\leq\frac{L_1+L_2}{L_1+L_2}=1,\]
which imply
\[\frac{2A_1A_2}{A_3(A_1+A_2)}\leq 0.\]
This gives a contradiction, because we assumed $A_1$, $A_2$ and $A_3$ strictly positive.
\end{proof}

The use of Lemma \ref{algebraic} is to argue properties of the topological nature of minimisers. This will be more clear in the following result.

\begin{lem}[Separating property for minimisers]\label{connection}Let $M$ be a complete Riemannian manifold of dimension $\geq 2$ of finite volume and
let  $\Omega$ be a finite perimeter set in  $M$ that minimizes $I^\flat$, i.e., 
\[D(\partial\Omega)=\mathcal{P}( \Omega)\left(\frac{1}{\mathrm{vol}(\Omega)}+\frac{1}{\mathrm{vol}(M)-\mathrm{vol}(\Omega)}\right)=I^\flat.\]
Then $\Omega$ and $M-\Omega$ are connected,  $\partial\Omega$ separates  $M$ and $I^\flat=C=D$. In particular, if $n+1\leq 7$, 
then $\partial\Omega$ is a smooth hypersurface that separates $M$.
\end{lem}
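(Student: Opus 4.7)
The plan is to exploit the algebraic inequality of Lemma~\ref{algebraic} to rule out any disconnection of $\Omega$ or of its complement, whereupon the identities $I^\flat=D=C$ fall out of Remark~\ref{d}. As a preliminary I would observe that the hypothesis $D(\partial\Omega)=I^\flat<+\infty$ (finiteness being guaranteed by $I^\flat\leq C<+\infty$ via a small geodesic ball) forces $0<V_0<\mathrm{vol}(M)$ for $V_0:=\mathrm{vol}(\Omega)$, and that the chain
\[
D(\partial\Omega)=\mathcal{P}(\Omega)\left(\tfrac{1}{V_0}+\tfrac{1}{\mathrm{vol}(M)-V_0}\right)\geq I_M(V_0)\left(\tfrac{1}{V_0}+\tfrac{1}{\mathrm{vol}(M)-V_0}\right)=I^\flat_M(V_0)\geq I^\flat
\]
must collapse to equality, yielding $\mathcal{P}(\Omega)=I_M(V_0)$. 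Hence $\Omega$ is an isoperimetric region and Theorem~\ref{regularity} is available.

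The core of the argument is the connectedness of $\Omega$, which I would prove by contradiction. Suppose $\Omega$ admits a nontrivial disjoint decomposition $\Omega=\Omega_1\sqcup\Omega_2$ into finite perimeter pieces of positive volume with $\mathcal{P}(\Omega)=\mathcal{P}(\Omega_1)+\mathcal{P}(\Omega_2)$. Setting $L_i=\mathcal{P}(\Omega_i)$, $A_i=\mathrm{vol}(\Omega_i)$ for $i=1,2$, and $A_3=\mathrm{vol}(M)-V_0$, Lemma~\ref{algebraic} produces
\[
D(\partial\Omega)=(L_1+L_2)\left(\tfrac{1}{A_1+A_2}+\tfrac{1}{A_3}\right)>\min\{D(\partial\Omega_1),D(\partial\Omega_2)\},
\]
so at least one of the $\Omega_i$ would have $D$-value strictly below $I^\flat$, contradicting the definition of $I^\flat$ as an infimum. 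Since $D$ is invariant under the involution $\Omega\leftrightarrow M-\Omega$, the complement $M-\Omega$ is also a minimizer of $I^\flat$, and the same reasoning yields its connectedness.

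Once $\Omega$ and $M-\Omega$ are both connected, $\partial\Omega$ separates $M$, so that $\partial\Omega$ is admissible for $C$: we get $C\leq C(\partial\Omega)=D(\partial\Omega)=I^\flat$, which combined with $I^\flat\leq D\leq C$ from Remark~\ref{d} forces $I^\flat=D=C$. Smoothness of $\partial\Omega$ when $n+1\leq 7$ then follows immediately from Theorem~\ref{regularity}(iii), since the singular set has dimension at most $n-7<0$. The main obstacle I anticipate is the rigorous production of the disjoint decomposition $\Omega=\Omega_1\sqcup\Omega_2$ with additive perimeters in arbitrary dimension: in the smooth regime ($n+1\leq 7$) this is just the standard topological splitting of $\Omega$ into connected components, but in higher dimensions one has to invoke the decomposition of a set of finite perimeter into its essentially indecomposable components (or the smooth approximation from \cite{miranda} already exploited in Theorem~\ref{ste2}) in order to guarantee that $\Omega_1$ and $\Omega_2$ are bona fide finite perimeter competitors for $D$.
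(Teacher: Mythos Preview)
Your proposal is correct and follows essentially the same route as the paper's proof: contradiction via Lemma~\ref{algebraic} applied to a hypothetical splitting $\Omega=\Omega_1\sqcup\Omega_2$, the symmetry $\mathcal{P}(\Omega)=\mathcal{P}(M-\Omega)$ to handle the complement, and then Remark~\ref{d} together with Theorem~\ref{regularity} to finish. Your extra preliminary observation that $\Omega$ must itself be an isoperimetric region, and your explicit flagging of the decomposition-into-components issue for finite perimeter sets in dimensions $n+1>7$, are refinements the paper does not spell out.
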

\begin{proof}
The proof goes by contradiction. Firstly, we show that $\Omega$ is connected. In order to do this, we suppose that $\Omega=\Omega_1\mathring{\cup}\Omega_2$ contains two connected components $\Omega_1$ and $\Omega_2$ such that $\mathrm{vol}(\Omega_1)=A_1$, 
$\mathrm{vol}(\Omega_2)=A_2$, $A_3=\mathrm{vol}(M)-(A_1+A_2)$, $L_1=\mathcal{P}(\Omega_1)$, and $L_2=\mathcal{P}(\Omega_2)$. Then 
\[D(\partial\Omega)=(L_1+L_2)\left(\frac{1}{A_1+A_2}+\frac{1}{A_3}\right),\] 
\[ D(\partial\Omega_1)=L_1\left(\frac{1}{A_1}+\frac{1}{A_2+A_3}\right),\] 
\[ D(\partial\Omega_2)=L_2\left(\frac{1}{A_2}+\frac{1}{A_1+A_3}\right).\] 
Applying Lemma \ref{algebraic}, we find 
\[D(\partial\Omega)> \min\left\{D(\partial\Omega_1), D(\partial\Omega_2)\right\} \geq \min\left\{I^\flat_M(A_1), I^\flat_M(A_2)\right\},\] which contradicts the minimality of $\Omega$. We conclude that $\Omega$ must be connected. Now  $\mathcal{P}(\Omega)=\mathcal{P}(M-\Omega)$ so the same argument implies that $M-\Omega$ is connected. This implies  that $\partial\Omega$  separates $M$. 

Then $C(\partial \Omega)=D(\partial \Omega)=I^\flat$ implies that $C \le D=I^\flat$. On the other hand, Remark \ref{d} shows $C \ge D=I^\flat$ and so $C=D=I^\flat$.
The remaining part of the result for the low dimensions follows from Theorem \ref{regularity}.
\end{proof}

We have all the ingredients for the proof of one of our main results. In connection with Theorem \ref{ste4} there are in the literature preceding results, in the case where $M=\mathbb{R}^2$ and the metric $g$ satisfies very special conditions, namely Theorem $1.1$ of \cite{hsu}. We make here a comparison between our Theorem \ref{ste4} and Theorem $1.1$ of \cite{hsu}. For  sake of completeness we report the result below.

\begin{thm}[See \cite{hsu}, Theorem 1.1]\label{hsu1}Let \(g=\left(g_{i j}\right)\) be a complete Riemannian metric on \(\mathbb{R}^{2}\) with finite total area \(A=\)
\(\int_{\mathbb{R}^{2}} d V_{g}\) satisfying
\begin{equation}\label{Eq:Hsu(1.1)}
\lambda_{1}(|x|) \delta_{i j} \leq g_{i j}(x) \leq \lambda_{2}(|x|) \delta_{i j}, \forall|x| \geq r_{0}    
\end{equation}
for some constant $r_{0}>1$ and positive monotone decreasing functions  $\lambda_{1}(r), \lambda_{2}(r)$ on $\left[r_{0}, \infty\right)$ that satisfy
\begin{equation}\label{Eq:Hsu(1.2)}
  \int_{r}^{c_{0} r} \sqrt{\lambda_{1}(\rho)} d \rho \geq \pi r \sqrt{\lambda_{2}(r)},\quad \forall r \geq r_{0},  
\end{equation}
\begin{equation}\label{Eq:Hsu(1.3)}
   r \sqrt{\lambda_{1}\left(c_{0} r\right)} \geq b_{1} \int_{r}^{\infty} \rho \lambda_{2}(\rho) d \rho, \quad \forall r \geq r_{0} ,
\end{equation}
\begin{equation}\label{Eq:Hsu(1.4)}
    \int_{r}^{r^{2}} \sqrt{\lambda_{1}(\rho)} d \rho \geq b_{2}, \quad \forall r \geq r_{0},
\end{equation}
\begin{equation}\label{Eq:Hsu(1.5)}
 \lambda_{1}\left(c_{0} r\right) \geq \delta \lambda_{2}(r), \quad \forall r \geq r_{0},   
\end{equation}
 for some constants \(c_{0}>1, b_{1}>0, b_{2}>0, \delta>0,\) where \(|x|\) is the distance of \(x\) from
the origin with respect to the Euclidean metric. For any closed simple curve \(\gamma\) in \(\mathbb{R}^{2}\),
let (cf. [2])

$$
I(\gamma)=L(\gamma)\left(\frac{1}{A_{\text {in }}(\gamma)}+\frac{1}{A_{\text {out }}(\gamma)}\right)
$$
where \(L(\gamma)\) is the length of the curve \(\gamma\), \(A_{\text {in}}(\gamma)\) and \(A_{\text {out}}(\gamma)\) are the areas of the
regions inside and outside \(\gamma\) respectively, with respect to the metric \(g\). Let
$$
I=I_{g}=\inf _{\gamma} I(\gamma)
$$
where the infimum is over all closed simple curves \(\gamma\) in \(\mathbb{R}^{2}\).
Suppose \(g\) satisfies \eqref{Eq:Hsu(1.1)} for some constant \(r_{0}>1\) where \(\lambda_{1}(r), \lambda_{2}(r)\)
are positive monotone decreasing functions on \(\left[r_{0}, \infty\right)\) that satisfy \eqref{Eq:Hsu(1.2)},\eqref{Eq:Hsu(1.3)},\eqref{Eq:Hsu(1.4)}
and \eqref{Eq:Hsu(1.5)} for some constants \(c_{0}>1, b_{1}>0, b_{2}>0\) and \(\delta>0 .\) Then there exists \(a\)
constant \(b_{0}>0\) depending on \(b_{1}, b_{2}\) and \(A\) such that the following holds:
If $I_{g}<b_{0}$, then there exists a closed simple curve \(\gamma\) in \(\mathbb{R}^{2}\) such that \(I_{g}=I(\gamma)\). Hence \(I_{g}>0\).
\end{thm}

\begin{rem}\label{Rem:ComparisonWithHsu}
In  \cite[Lemma 2.1]{hsu} it is showed that under the assumptions of Theorem \ref{hsu1} there is a minimizing sequence of separating Jordan curves $(\gamma_j)_{j\in\mathbb{R}}$ that stay inside a compact set of $\mathbb{R}^2$. This means that \[{\underset{V \rightarrow 0} \liminf \ \frac{I_M(V)}{V}} \ =+\infty>C_1=b_0>0,\] because in a compact manifold by Berard-Meyer \cite[Appendix C]{beme}  we have that $I_M(V)\sim c_2V^{\frac12}$, where $c_2>0$ is the $2$-dimensional Euclidean isoperimetric constant i.e., $c_2=2\sqrt{\pi}$ and $b_0>0$  is the constant found in Theorem \ref{hsu1}. The meaning of $I_g$ in  \cite[Lemma 2.1]{hsu} is exactly the meaning of $\tilde{I}^\sharp$ here. Note that $\tilde{I}^\sharp\ge I^\flat$ and that in the plane our notion of separating curves coincides with  the notion of simple closed curves  in \ref{hsu1}. This is clear by  the results of \cite{Polulyakh}. However there are examples showing that $\tilde{I}_M\ne I_M$. To see this in the case in which $M$ is compact one can consult Section $9$, page $485$, Fig. $2$ of \cite{BavardPansu}. It is proved there that there are isoperimetric regions in a $2$ dimensional Riemannian manifold such that the complementary set is disconnected. Starting with the example constructed by C. Bavard and P. Pansu \cite{BavardPansu} one can easily construct a complete non-compact Riemannian manifold of dimension $2$ and of finite volume $(M^2,g)$, in which there are isoperimetric regions $\Omega$ such that $M\setminus\Omega$ is disconnected.
\end{rem}

Looking at Remark \ref{Rem:ComparisonWithHsu},  the hypothesis of  the following theorem are satisfied when $C_1=b_0$. On the other hand, our result applies to much more general Riemannian manifolds and do not depend directly on the dimension two.

 Notice also that the case of $M$  compact (in Theorem \ref{ste4}) is known in  literature, because it follows by results of  Hamilton \cite{hamilton2} (see also  \cite[Lemma 5.82]{chow}). The real contribution of the theorem below deals with the noncompact case $M$ and with the formulation of the conditions  $(i)$ and $(ii)$. 

\begin{thm} \label{ste4} Let $M$ be a complete Riemannian manifold of finite volume of dimension $n+1 \ge 2$. If $M$ satisfies  the following two conditions for a positive constant $C_1:$
\begin{itemize}
\item[(i)] ${\underset{V \rightarrow 0} \liminf \ \frac{I_M(V)}{V}}\geq C_1>0$;
 \item[(ii)]$I^\flat<C_1;$
\end{itemize}
then  there exists a connected isoperimetric region $\Omega \subseteq M$ of positive volume such that $M\setminus\Omega$ is connected and 
 \begin{equation}\label{Eq:Ste4Statement}
 \tilde{I}^\sharp=I^\flat=C(\partial \Omega)=D(\partial \Omega) =C= D>0.
 \end{equation}  
 In particular, $C=D>0$ and, if $n+ 1 \leq 7$,  $\partial \Omega$  separates $M$ and is smooth. Conversely, if  $M$ is noncompact and $\Omega \subseteq M$ is a connected unbounded finite perimeter set of positive volume such that $M\setminus\Omega$ is connected satisfying \eqref{Eq:Ste4Statement}, then $\Omega$ is an isoperimetric region and both $(i)$ and $(ii)$ hold.
 \end{thm}
 
      \begin{proof} We start proving the first implication. By Corollary \ref{continuity} the function $$ V \in ]0,\mathrm{vol}(M)[ \mapsto I^\flat_M(V) \in \ ]0,\infty [$$ is continuous on $]0, \mathrm{vol}(M)[$. Conditions (i) and (ii) ensure that this function attains its infimum at a global minimum point $V_0\in ]0, \mathrm{vol}(M)[$, i.e., there exists $V_0 \in ]0, \mathrm{vol}(M)[$ such that $I^\flat_M(V_0)=I^\flat$. By Theorem \ref{rr}, there exists an isoperimetric region $\Omega$ of volume $\mathrm{vol}(\Omega)=V_0$  such that $I^\flat_M(\mathrm{vol}(\Omega))=D(\partial \Omega)$. Lemma \ref{connection} shows that $\Omega$ and $M-\Omega$ are connected.  Theorem \ref{regularity} shows that  $\partial\Omega$ is smooth, if $n+1\leq 7$. 
      
      Conversely, we prove the reverse implication. First of all,  note that the assumptions on $\Omega$ imply that it is an isoperimetric region. Now we argue by contradiction.     If $$I^\flat\ge{\underset{V \rightarrow 0^+} \liminf \ \frac{I_M(V)}{V}},$$ then $0\le{\underset{V \rightarrow 0} \liminf \ I_M(V)}\le\underset{V \rightarrow 0} \lim I^\flat V$. It follows that $$\underset{V \rightarrow 0^+} \liminf \ I_M(V)\left[\frac{1}{V}+\frac{1}{\mathrm{vol}(M)-V}\right]=\underset{V \rightarrow 0^+} \liminf \ \frac{I_M(V)}{V}\le I^\flat.$$ 
      On the other hand, by definition we get $$I^\flat\le\underset{V \rightarrow 0^+} \liminf \ I_M(V)\left[\frac{1}{V}+\frac{1}{\mathrm{vol}(M)-V}\right].$$ Thus 
      \begin{equation}\label{Eq:Ste4}
      I^\flat=\underset{V \rightarrow 0^+} \liminf \ I_M(V)\left[\frac{1}{V}+\frac{1}{\mathrm{vol}(M)-V}\right]=C(\partial\Omega). 
      \end{equation}
      Notice that $I$ could have multiple positive minima, so the proof is more complicated.
      In first we observe that if $M$ is compact $(i)$ and $(ii)$ are automatically satisfied for every $C_1>I^\flat$, because ${I_M(v)}{v}\to+\infty$ as $v\to0^+$. We divide the remaining of the proof into two parts. In the first one we assume that $M$ and $\Omega$ are unbounded and in the second part we assume that $\Omega$ is bounded while keeping $M$ unbounded. With this aim in mind, assume that $\Omega$ is unbounded. Fix a point $p$ in $M$ and consider a sequence of radii $R_j$ for $j \in \mathbb{N}$ getting to $\infty$ increasily, set $\Delta V_j:=\mathrm{vol}(\Omega\setminus B(p, R_j))$ then by coarea formula we can find a radius $\tilde{R}_j$ such that $R_j\le\tilde{R}_j\le 2R_j$ and $A(\Omega\cap\partial B_g(p, \tilde{R}_j))\le\frac{\Delta V_j}{R_j}$ as in the proof of Corollary \ref{compattezza}. Define the sequence of competitors for the isoperimetric problem \[\Omega_j:=\Omega\setminus B_g(p,\tilde{R}_j) \mathring{\cup} E_j,\] as in Figure 3 below,
    \begin{figure}[h!]\label{Fig:IfPartUnbounded}
 \includegraphics[scale=0.7]{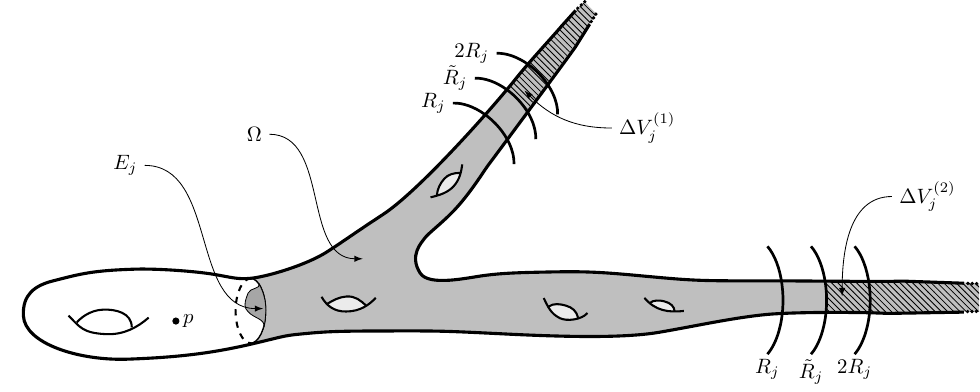}
    \caption{Construction of a competitor, showing that one cannot have unbounded absolute minimizers, if $(i)$ and $(ii)$ are simultaneously satisfied. Here $\Delta V_j:=\Delta V^1_j+\Delta V^2_j$.}
\end{figure}  
\normalsize
      where $E_j$ is a small perturbation of the boundary of $\Omega$ in a neighborhood of a fixed point $x\in\partial\Omega$ (obtained as in the deformation Lemma  \cite[Lemma 3.10]{nardulli2}) and satisfying $\mathrm{vol}(E_j)=\Delta V_j$. It is easy to check that $\mathrm{vol}(\Omega_j)=\mathrm{vol}(\Omega)$ for every $j\in\mathbb{N}$. Since $\Omega$ is an isoperimetric region, we denote by $H(\partial\Omega)$ scalar product of the mean curvature vector of $\partial\Omega$ with the inward pointing normal at a regular boundary point and get by \cite[Lemma 3.10]{nardulli2}
      \begin{equation}\label{Eq:CompetitorsIfPart}
      0\le A(\partial\Omega_j)-A(\partial\Omega)\le H(\partial\Omega)(1+\varepsilon_j)\Delta V_j+2\frac{\Delta V_j}{R_j}-A(\partial(\Omega\cap (M\setminus B_g(p, \tilde{R}_j))),
      \end{equation}
      where $\varepsilon_j\to0$.
      Dividing the preceding equation by $\Delta V_j$ we have 
      \begin{equation}\label{Eq:CompetitorsIfPart0}
      \frac{A(\partial(\Omega\cap (M\setminus B_g(p, \tilde{R}_j)))}{\Delta V_j}\le H(\partial\Omega)(1+\varepsilon_j)+\frac2{R_j}.
      \end{equation} 
      Finally taking the limit as $j\to+\infty$, and recalling that the left hand side of \eqref{Eq:CompetitorsIfPart0}  tends to $I^{\flat}$ by \eqref{Eq:Ste4}, we get
      \begin{equation}\label{Eq:CompetitorsIfPart1}
      I^{\flat}=\lim_{j\to\infty}\frac{A(\partial(\Omega\cap (M\setminus B_g(p, \tilde{R}_j)))}{\Delta V_j}\le\lim_{j\to\infty}H(\partial\Omega)(1+\varepsilon_j)+\frac2{R_j}=H(\partial\Omega).
      \end{equation}
       Notice that this construction is always possible, because $\Omega$ is an isoperimetric region and so by regularity theory the boundary of $\Omega$ is an open set (actually dense). Now we recall the argument of Hamilton in \cite{hamilton4} with equidistant variations of $\Omega$ that allow us to determine the exact value $H(\partial\Omega)$, i.e., 
       \begin{equation}\label{Eq:HamiltonVariationalEquality}
       H({\partial\Omega})=\mathcal{P}(\Omega)\left[\frac1{\mathrm{vol}(\Omega)}-\frac1{\mathrm{vol}(M)-\mathrm{vol}(\Omega)}\right].
       \end{equation}
      Equation \eqref{Eq:HamiltonVariationalEquality} is obtained considering the equidistant domains $\Omega_t$. In dimension $2$  $\Omega_t$ they are still connected with connected complement for $t$ small enough, this is easy to see when $\partial\Omega$ is smooth. In dimension $3$ and higher to apply the argument of Hamilton we do not need to be sure that the equidistant $\Omega_t$ are still separating for small $t$, because of the equivalence with the non separating problem showed in Proposition \ref{Prop:EquivalenceInHigherDimensions0}. Now, deriving the Hamilton functional (restricted to the equidistant domains) \[\varphi : t \in [0,\varepsilon] \mapsto \varphi(t)=\mathcal{P}( \Omega_t)\left(\frac{1}{\mathrm{vol}(\Omega_t)}+\frac{1}{\mathrm{vol}(M)-\mathrm{vol}(\Omega_t)}\right) \in [0, + \infty[\] defined  for some $\varepsilon>0$ and evaluating in $t=0$. Then using the minimality of $\Omega_0=\Omega$ we get the condition $\varphi'(0)=0$, carrying out the due computations one easily obtains \eqref{Eq:HamiltonVariationalEquality}.
        Combining \eqref{Eq:Ste4Statement}, \eqref{Eq:CompetitorsIfPart1}, \eqref{Eq:HamiltonVariationalEquality} we conclude that
      \begin{eqnarray*}
      \mathcal{P}(\Omega)\left[\frac{1}{\mathrm{vol}(\Omega)}+\frac{1}{\mathrm{vol}(M)-\mathrm{vol}(\Omega)}\right] &  \stackrel{\eqref{Eq:Ste4Statement}}{=} & I^{\flat} \\  &  \stackrel{\eqref{Eq:CompetitorsIfPart1}}{\le} &  H({\partial\Omega})\\ & \stackrel{\eqref{Eq:HamiltonVariationalEquality}}{=} & \mathcal{P}(\Omega)\left[\frac{1}{\mathrm{vol}(\Omega)}-\frac{1}{\mathrm{vol}(M)-\mathrm{vol}(\Omega)}\right],
      \end{eqnarray*} which is the desired contradiction implying that the unbounded positive volume minimizer does not exists. Therefore the result follows.       
      \end{proof}

\begin{rem}\label{Rem:CounterExSte4}
In the proof of the previous theorem, we cannot avoid the assumption of having unbounded minimizers, because it is not too hard to construct examples of manifolds admitting compact isoperimetric regions minimizing $I$ and not satisfying $(i)$ and $(ii)$ of Theorem \ref{ste4}. Roughly speaking take a compact manifold and fix $\Omega$ as a solution of our minimization problem. Then in  $M \setminus \Omega$ make a surgery and attach a finite volume tail such that the complement of large geodesic balls are isoperimetric regions with the ratio area/volume tending to $I^\flat$.   
\end{rem}

We may replace (i) of Theorem \ref{ste4} with another condition.

\begin{lem}[Symmetry's Lemma]\label{equivcond}Let $M$ be a complete Riemannian manifold of finite volume $A=\mathrm{vol}(M)$  and $C_1$ a positive constant. Then
the following statement are equivalent:
\begin{itemize}
\item[(j)] ${\underset{V \rightarrow 0} \liminf \ \frac{I_M(V)}{V}} \ \geq C_1>0$; 
\item[(jj)]${\underset{V \rightarrow A} \liminf \ \frac{I_M(V)}{A-V}} \ \geq C_1>0$.
\end{itemize}
\end{lem}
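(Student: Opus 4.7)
The plan is to reduce the equivalence to a routine change of variables, once the fundamental symmetry $I_M(V) = I_M(A-V)$ of the isoperimetric profile has been established.

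The first step, which contains the only nontrivial content, is the identity $\mathcal{P}(\Omega) = \mathcal{P}(M \setminus \Omega)$ for every finite perimeter set $\Omega \subseteq M$. Because $M$ is a complete Riemannian manifold with no boundary and $\mathrm{vol}(M) < \infty$, the global divergence theorem applied to any compactly supported smooth vector field $Y$ with $|Y|=1$ gives $\int_M \mathrm{div}\, Y \, d\mathcal{H}^{n+1} = 0$, so
\[
\int_{M\setminus\Omega}\mathrm{div}\, Y \, d\mathcal{H}^{n+1} \;=\; -\int_{\Omega}\mathrm{div}\, Y \, d\mathcal{H}^{n+1} \;=\; \int_{\Omega}\mathrm{div}(-Y)\, d\mathcal{H}^{n+1}.
\]
Since the replacement $Y \mapsto -Y$ is a bijection of the admissible class used to define the perimeter, taking suprema yields $\mathcal{P}(M\setminus\Omega) = \mathcal{P}(\Omega)$. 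The map $\Omega \mapsto M\setminus\Omega$ is therefore a perimeter-preserving bijection between finite perimeter sets of volume $V$ and finite perimeter sets of volume $A - V$, so passing to the infimum in the definition of $I_M$ yields
\[
I_M(V) \;=\; I_M(A-V) \qquad \text{for every } V \in (0,A).
\]

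The second and final step is a substitution. Setting $W = A - V$, one has $V \to 0^{+}$ if and only if $W \to A^{-}$, and the previous identity gives
\[
\frac{I_M(V)}{V} \;=\; \frac{I_M(A-V)}{V} \;=\; \frac{I_M(W)}{A - W}.
\]
Taking the $\liminf$ on both sides converts (j) directly into (jj) and vice versa, which proves the equivalence.

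The only delicate point is ensuring that the complement $M\setminus\Omega$ genuinely has finite perimeter and that the divergence theorem applies globally; both are furnished by the hypothesis $\mathrm{vol}(M) < \infty$ together with completeness of $M$, so no additional approximation or exhaustion argument is needed. Everything else is just a change of variables.
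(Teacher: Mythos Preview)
Your proof is correct and follows essentially the same route as the paper: both reduce the equivalence to the symmetry $I_M(V)=I_M(A-V)$ and then perform the substitution $W=A-V$. The paper simply asserts this symmetry in one line, while you supply the justification via $\mathcal{P}(\Omega)=\mathcal{P}(M\setminus\Omega)$ and the divergence theorem; this extra detail is fine and the argument is sound.
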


\begin{proof}It is enough to note that $I_M(V)=I_M(A-V)$ for all $V \in \ ]0,A[$. The rest is just an application of the definitions.
\end{proof}

Therefore Theorem \ref{ste4} may be reformulated.

\begin{cor}\label{ste4bis} Theorem \ref{ste4} is true when we replace (i) with (jj) of Lemma \ref{equivcond}.
\end{cor}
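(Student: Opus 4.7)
This corollary is a direct translation of Theorem \ref{ste4} via the Symmetry's Lemma. The condition (i) in Theorem \ref{ste4} is a growth hypothesis on the isoperimetric profile near volume $0$; presumably the corollary is meant to substitute for (i) the symmetric growth hypothesis (jj) of Lemma \ref{equivcond}, namely $\liminf_{V \to A} I_M(V)/(A-V) \geq C_1 > 0$ (so that the corollary has actual content). My plan is therefore to observe that this substitution is legitimate by the symmetry $I_M(V) = I_M(A-V)$ and then simply invoke Theorem \ref{ste4}.

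First I would note that under the standing hypothesis $\mathrm{vol}(M) = A < \infty$, the isoperimetric profile is symmetric about $A/2$: for every $V \in \,]0,A[$, the complement $M \setminus \Omega$ of a set $\Omega$ of volume $V$ has volume $A-V$ and the same perimeter, so $I_M(V) = I_M(A-V)$. This is exactly the content of Lemma \ref{equivcond}, whose proof is a one-line change of variables $W = A - V$.

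Second, assuming (jj) holds, I apply Lemma \ref{equivcond} to obtain condition (j), which is verbatim hypothesis (i) of Theorem \ref{ste4}. Together with (ii), the hypotheses of Theorem \ref{ste4} are satisfied, and the conclusion of Theorem \ref{ste4} follows unchanged: there exists a connected finite perimeter set $\Omega \subseteq M$ with
\[
\tilde{I}^\sharp = I^\flat = C(\partial \Omega) = D(\partial \Omega) = C = D,
\]
with $C = D > 0$ and, in low dimensions $n+1 \leq 7$, the regularity of $\partial \Omega$ and the fact that it separates $M$ both transferred from Theorem \ref{ste4}.

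\textbf{Expected obstacle.} There is essentially no obstacle; the only subtlety is noting that the corollary statement appears to contain a typographical issue (replacing (i) by (j) would be tautological since (i) and (j) are the same condition), and one must read the intent as substituting the genuinely different but equivalent condition (jj). Once this is recognized, Lemma \ref{equivcond} does all the work and the result is immediate from Theorem \ref{ste4}.
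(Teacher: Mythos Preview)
Your proposal is correct and follows exactly the paper's own argument: the paper's proof is the single line ``It follows from Lemma \ref{equivcond},'' and you have merely unpacked this by invoking the symmetry $I_M(V)=I_M(A-V)$ and then applying Theorem \ref{ste4}. Your observation that the statement as written is tautological (since (i) and (j) coincide verbatim) and should be read as replacing (i) by (jj) is accurate and worth flagging.
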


\begin{proof} It follows from Lemma \ref{equivcond}.
\end{proof}

If we want to formulate an analogous result of Theorem \ref{ste4} for $J(H)$,  we have problems with the condition (i) of Theorem \ref{ste4}, since it might happen that \[\liminf_{V \rightarrow 0} \ \frac{I_M(V)^{n+1}}{V^n}=0. \]     
About the functional $I(N)$, the limit
 \[\liminf_{V \rightarrow 0} \ \frac{\tilde{I}_M(V)^{n+1}}{V^n}\]
 may be zero or not, but the previous arguments shall be modified. 


We are in need of some preliminary results, in order to justify this statement. The reader can find the following lemma in the special case of $n=1$ in \cite[Lemma 5.86]{chow}. The proof of our more general statement goes along the same lines as the one in \cite[Lemma 5.86]{chow}. We rewrite the detailed proof here for completeness's sake.

\begin{lem}\label{algebraicchow} 
For every positive real numbers $L_1,$ $L_2,$ $A_1$, $A_2$, $A_3$ and for every $n > 1$, we get
\[(L_1+L_2)^n\left(\frac{1}{A_1+A_2}+\frac{1}{A_3}\right)>\min\left\{L^n_1\left(\frac{1}{A_1}+\frac{1}{A_2+A_3}\right), L^n_2\left(\frac{1}{A_2}+\frac{1}{A_1+A_3}\right)\right\}.\]
\end{lem}

\begin{proof}
By contradiction,  assume that $L_1, L_2, A_1, A_2, A_3 $ satisfy
\[ {(L_1+L_2)}^n\left(\frac{1}{A_1+A_2}+\frac{1}{A_3}\right)\leq L^n_1\left(\frac{1}{A_1}+\frac{1}{A_2+A_3}\right),\]
\[ {(L_1+L_2)}^n\left(\frac{1}{A_1+A_2}+\frac{1}{A_3}\right)\leq L^n_2\left(\frac{1}{A_2}+\frac{1}{A_1+A_3}\right).\]
We get the following inequalities
\[\frac{A_1(A_2+A_3)}{A_3(A_1+A_2)}\leq\frac{L^n_1}{{(L_1+L_2)}^n} \ \ \mathrm{and} \ \ \frac{A_2(A_1+A_3)}{A_3(A_1+A_2)}\leq\frac{L^n_2}{{(L_1+L_2)}^n}.\]
Summing up these  two last inequalities, we get
\[\frac{2A_1A_2}{A_3(A_1+A_2)}\leq\frac{L^n_1+L^n_2}{{(L_1+L_2)}^n}-1 = - \frac{1}{{(L_1+L_2)}^n} \ \sum^{n-1}_{k=1} \frac{n!}{(n-k)! k!}L^{n-k}_1 L^k_2  < 0.\]
This gives a contradiction.
\end{proof}
\begin{lem}\label{algebraicchow1-} 
For every positive real numbers $L_1,$ $L_2,$ $A_1$, $A_2$, $A_3$ and for every $n > 1$, we get
\[(L_1+L_2)^{n}\left(\frac{1}{A_1+A_2}+\frac{1}{A_3}\right)^n>\min\left\{L^{n}_1\left(\frac{1}{A_1}+\frac{1}{A_2+A_3}\right)^n, L^{n}_2\left(\frac{1}{A_2}+\frac{1}{A_1+A_3}\right)^n\right\}.\]
\end{lem}
\begin{proof} Elevating to the power $n$ both sides of the equation in Lemma \ref{algebraic},  the result follows. \end{proof}
\begin{lem}[Second Condition of Superadditivity]\label{algebraicchow1} 
For every positive real numbers $L_1,$ $L_2,$ $A_1$, $A_2$, $A_3$ and for every $n > 1$, we get
\[(L_1+L_2)^{n+1}\left(\frac{1}{A_1+A_2}+\frac{1}{A_3}\right)^n>\min\left\{L^{n+1}_1\left(\frac{1}{A_1}+\frac{1}{A_2+A_3}\right)^n, L^{n+1}_2\left(\frac{1}{A_2}+\frac{1}{A_1+A_3}\right)^n\right\}.\]
\end{lem}

\begin{proof} Put $\alpha:=L_1+L_2$, $\beta:=\left(\frac{1}{A_1+A_2}+\frac{1}{A_3}\right)$, $\gamma:=\left(\frac{1}{A_1}+\frac{1}{A_2+A_3}\right)$, $\delta:=\left(\frac{1}{A_2}+\frac{1}{A_1+A_3}\right)$. With this notation in mind we have
  \begin{eqnarray*}
\alpha^{n+1}\beta^n & = & \alpha\alpha^{n}\beta^n\\
& > & \alpha\min\left\{L_1^n\gamma^n, L_2^n\delta^n\right\}\\
& = & \min\left\{\alpha L_1^n\gamma^n, \alpha L_2^n\delta^n\right\}\\
& > & \min\left\{L_1^{n+1}\gamma^n, L_2^{n+1}\delta^n\right\}.
\end{eqnarray*} 
So the result follows.
\end{proof}

We may apply the same argument of Lemma \ref{connection}, in order to have topological information on the connected regions which appear in the proof of Theorem \ref{ste4}.


\begin{cor}\label{connectionchow}Let $M$ be a complete Riemannian manifold of dimension $n+1$ of finite volume and let
 $\Omega$ be a finite perimeter set in  $M$ that minimizes $\tilde{I}^*$, i.e., 
\[J(\partial\Omega)=\mathcal{P}( \Omega)^{n+1}\left(\frac{1}{\mathrm{vol}(\Omega)}+\frac{1}{\mathrm{vol}(M)-\mathrm{vol}(\Omega)}\right)^n=\tilde{I}^*.\]
Then $\Omega$ and $M\setminus\Omega$ are connected, and $\tilde{I}^*=I=J$. 
\end{cor}

\begin{proof} By Lemma \ref{algebraicchow1}, we  overlap the proof of Lemma \ref{connection} mutatis mutandis.
\end{proof}

\begin{thm} \label{ste61} If $M$ is a complete Riemannian manifold of finite volume, satisfying the following conditions for a positive constant $C_2$:
\begin{itemize}
\item[(i)] ${\underset{V \rightarrow 0} \liminf \ \frac{I_M(V)^{n+1}}{V^n}} \ \geq C_2>0$;
 \item[(ii)]$I^*<C_2,$
\end{itemize}
then there exists a connected isoperimetric region $\Omega \subseteq M$ of positive volume such that $M\setminus\Omega$ is connected, $0<\mathrm{vol}(\Omega)<\mathrm{vol}(M)$, and $I(\partial \Omega) = I^*>0$.  In particular, $\tilde{I}^*>0$ and,  if $n+ 1 \leq 7$, then $\partial\Omega$ is smooth, $\partial \Omega$  separates $M$  and 
 \begin{equation}\label{Eq:ste61Statement}
 I^*=\tilde{I}^*=J(\partial \Omega)=I(\partial \Omega) =J=I>0.
 \end{equation}
 Conversely, if  $M$ is noncompact and $\Omega \subseteq M$ is a connected unbounded finite perimeter set of positive volume such that $M\setminus\Omega$ is connected satisfying \eqref{Eq:ste61Statement}, then $\Omega$ is an isoperimetric region and both $(i)$ and $(ii)$ hold. 
 \end{thm}   

\begin{proof} We start proving the first implication. To this aim, note  that $I^* \le \tilde{I}^*$. By the definition of $I^*$, we may consider a minimizing sequence $\{\Omega_i\}_{i \in \mathbb{N}}$ such that $I(\partial \Omega_i)$ tends to $I^*$ for $i$ running to $\infty$. Now  it is easy to observe that $\mathrm{vol}(\Omega_i)+\mathcal{P}(\Omega_i)$ is uniformly bounded for all $i \in \mathbb{N}$. Putting $\mathrm{vol}(\Omega_i)=V_i$, the conditions (i) and (ii) together with Lemma \ref{equivcond} imply that $\{V_i\}_{i \in \mathbb{N}}$ does not tend neither to $0$ nor to $A=\mathrm{vol}(M)$. Hence there exists a $\delta >0$ such that $V_i \in \ [\delta, \mathrm{vol}(M) - \delta] $ for all $i \in \mathbb{N}$. From Corollary \ref{compattezza}, we may find a finite perimeter set $\Omega$ such that $\{\Omega_i\}_{i \in \mathbb{N}}$ converges to $\Omega$  in $L^1$--norm. Therefore 
\[\mathrm{vol}(\Omega)= \lim_{i \rightarrow \infty} V_i>0 ,\] 
and by the lower semicontinuity of the perimeters 
\[\mathcal{P}(\Omega) \le \liminf_{i \rightarrow \infty} \mathcal{P}(\Omega_i)\] 
we may deduce $I(\partial \Omega) = I^* \le \tilde{I}^*$ and moreover that $\Omega$ is an isoperimetric region and so if $n+ 1 \leq 7$, the standard regularity theory for isoperimetric regions recalled in Theorem \ref{regularity} ensures that $\partial\Omega$ is smooth. In principle by construction one can have  either $ I^*=\tilde{I}^*$ or  $I(\partial\Omega)<\tilde{I}^*$. In the first case the theorem follows. In the second case it remains to show that $\partial\Omega$ is separating, i.e., that $\Omega$ and $M\setminus\Omega$ are connected. This is indeed the case. We will show that $\partial\Omega$ is separating by contradiction.  Suppose that $\Omega$ is not connected. Without loss of generality we can assume that there is a partition $\Omega=\Omega_1\mathring{\cup}\Omega_2$ with $\Omega_1$ and $\Omega_2$ open sets with smooth boundary by Lemma \ref{algebraicchow1}. We have
$$
I^*=I(\partial\Omega)>\min\{I(\partial\Omega_1),I(\partial\Omega_2)\}\ge I^*,$$
 which is the desired contradiction. So $\Omega$ has to be connected. The same argument applied to $M\setminus\Omega$ in place of $\Omega$ shows that $M\setminus \Omega$ is connected too. Thus by definition $\partial\Omega$ is separating.
Then we have also that the converse $I(\partial \Omega) \ge \tilde{I}^*$ is true, because $\partial\Omega$ separating means that $\Omega$ belong to the family of admissible sets for the minimization problem associated to $\tilde{I}^*$. Hence $I(\partial \Omega) = \tilde{I}^*$ and the result follows.

Conversely, we prove the reverse implication. First of all,  note that the assumptions on $\Omega$ imply that it is an isoperimetric region. Now we argue by contradiction. Suppose that \[I^*\ge{\underset{V \rightarrow 0^+} \liminf \ \frac{I_M(V)^{n+1}}{V^n}},\] then \[0\le{\underset{V \rightarrow 0^+} \liminf \ I_M(V)^{n+1}}\le  \underset{V \rightarrow 0^+} \lim I^* V^n =I^* \underset{V \rightarrow 0^+} \lim V^n.\] It follows that \[\underset{V \rightarrow 0^+} \liminf \ I_M(V)^{n+1}\left[\frac{1}{V}+\frac{1}{\mathrm{vol}(M)-V}\right]^n=\underset{V \rightarrow 0^+} \liminf \ \frac{I_M(V)^{n+1}}{V^n}\le I^*.\] On the other hand, by definition we get $I^*\le\underset{V \rightarrow 0^+} \liminf \ I_M(V)^{n+1}\left[\frac{1}{V}+\frac{1}{\mathrm{vol}(M)-V}\right]^n$. Thus 
\begin{equation}\label{Eq:ste61}
I^*=\underset{V \rightarrow 0^+} \liminf \ I_M(V)^{n+1}\left[\frac{1}{V}+\frac{1}{\mathrm{vol}(M)-V}\right]^n.
\end{equation} 
Now we proceed exactly as in the proof of the second part of Theorem \ref{ste4}, in order to prove that 
\begin{equation}\label{Eq:ste610}
\lim_{V\to0^+}\frac{I_M(V)}{V}\le H(\partial\Omega)<+\infty.
\end{equation}
Notice that $H(\partial\Omega)<+\infty$ because $\Omega$ is an isoperimetric region of positive finite volume.
On the other hand by \eqref{Eq:ste61} we get that since $0<I^*<+\infty$ necessarily we have $ \lim_{V\to0^+}\frac{I_M(V)}{V}=+\infty$, in contradiction with \eqref{Eq:ste610}. The result follows.
\end{proof}
For the same reasons as in Theorem \ref{ste4} we see that we cannot avoid the assumption that $\Omega$ is unbounded.

\begin{rem}\label{Rem:HamiltonGeneralized} When we try to fulfill the hypothesis of Theorem \ref{ste61} applied to the case of $M$ being a compact manifold of dimension $n+1$, we see that $C_2\le(c_{n+1})^{n+1}$, where  $c_{n+1}:=(n+1)\omega_{n+1}^{\frac1{n+1}}$, is the Euclidean isoperimetric constant of dimension $n+1$.
\end{rem}

 In dimension $n+1\ge3$ the preceding theorems remain true if we replace in the statements $I_M$ by $\tilde{I}_M$, because of the following theorem.

\begin{prop}\label{Prop:EquivalenceInHigherDimensions1}
If $M$ is a Riemannian manifold of dimension  $n+1 \ge 3$ of finite volume, then $\tilde{I}_M=I_M$. In particular $\tilde{I}_M$ is continuous.
\end{prop}

\begin{proof} By  Definition \ref{hamilton}, $I_M\le \tilde I_M$.
Suppose that one has a minimizing sequence (i.e., $\mathcal{P}(\Omega_j)\to I_M(V)$) of connected $\Omega_j\subset M$ with volume $\mathrm{vol}(\Omega_j)=V>0$ for every $j$, in dimension  $n + 1 \ge2$ and assume that $M\setminus\Omega_j$ is not connected than one can consider the construction schematically represented in Fig. 4 below.

Take a curve $\gamma_j:[0,1]\to M$, $\gamma_j(]0,1[)\subset M\setminus\Omega_j$ joining two distinct points in the boundary of $\gamma_j(0),\gamma_j(1)\in\partial\Omega_j$, $\gamma_j(0)\ne\gamma_j(1)$, then take a small tubular neighborhood of $\gamma_j$ of small area $A_j\to0$ and small volume $\Delta v_j\to0$ to be chosen in such a way $\tilde c_j\Delta v_j\to0$, where $\tilde{c}_j=\tilde c_j(\Omega_j)$ is an upper bound on the modulus of the mean curvature of $\partial\Omega_j$, and glue this thin tube smoothly to $\partial\Omega_j$ then compensate the volume added or subtracted in the gluing procedure in a far point of $\partial\Omega_j$,  and one obtain a connected competitor $\Omega'_j$, such that $M\setminus\Omega_j$ is connected and $\mathrm{vol}(\Omega'_j)=V$ and 
$$|\mathcal{P}(\Omega_j)-\mathcal{P}(\Omega'_j)|=\Delta A_j+\Delta A'_j\le \Delta A_j+\tilde c_j(\Omega_j)\Delta v_j\to0,$$
where $\Delta A'_j\le \tilde{c}_j(\Omega_j)\Delta v_j$.

\begin{figure}[h!]\label{fig:1}
 \includegraphics[scale=1]{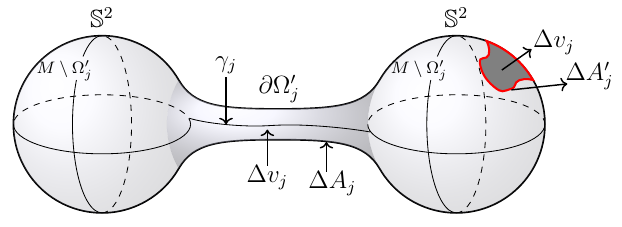}
    \caption{An example in dimension $n+1\ge3$ of the competitor $\Omega'_j$ constructed from $\Omega_j$ in the proof of Proposition \ref{Prop:EquivalenceInHigherDimensions1}.}
\end{figure}

So $I_M(V)=\lim_{j\to+\infty}\mathcal{P}(\Omega'_j)\ge\tilde{I}_M(V)$. 

\end{proof}


The following remark is appropriate here.

\begin{rem} Note that in dimension $2$ the two minimization problems whose infima are respectively $\tilde{I}^*$ and $I^*$ are not equivalent. This does not happen for dimensions $\ge 3$. See Remark \ref{Rem:ComparisonWithHsu}.
\end{rem}

Now we may improve Theorem \ref{ste61} in the case of dimension greater or equal than two, considering an ananologous statement for $\tilde{I}_M$ instead of $I_M$.

\begin{thm}\label{ste5bis}
If  $M$ is  a Riemannian manifold of finite volume of dimension $n+1 \ge 2$ satisfying  
\begin{enumerate}
\item[(i).] \label{Eq:ste5bis}$\underset{V \rightarrow 0} \liminf \ \frac{{\tilde{I}_M(V)}^{n+1}}{V^n} \ \geq C_3>0;$
 \item[(ii).]\label{Eq:ste5bis0} $\tilde{I}^*< C_3;$
\end{enumerate}
then $\tilde{I}^*>0$. Moreover, if $n+1 \ge 3$, then $\tilde{I}^*=I^*>0$ and the equality is achieved by a connected isoperimetric region $\Omega$ with connected complementary set $M\setminus\Omega$. Furthermore, if $n+1\le7$, then $\partial\Omega$ is smooth and separating. 

Conversely, if  $n+1\ge3$, $M$ is noncompact and $\Omega \subseteq M$ is a connected unbounded finite perimeter set of positive volume such that $M\setminus\Omega$ is connected satisfying \eqref{Eq:ste61Statement}, then $\Omega$ is an isoperimetric region and both $(i)$ and $(ii)$ hold. 
\end{thm}
\begin{proof} Consider a finite perimeter  minimizing sequence $\{\Omega_i\}_{i \in \mathbb{N}}$ for $\tilde I^*$, namely $I(\partial\Omega_i)\to\tilde{I}^*$ with each $\partial \Omega_i$  separating.  Corollary \ref{compattezza} implies that there exists a finite perimeter set $\Omega$ such that a sub-sequence $\Omega_i$ converges to $\Omega$ in $L^1(M)$ and $0\le I(\partial\Omega)\le\tilde{I}^*$. On the other hand, Corollary \ref{connectionchow} implies that $\partial \Omega$ separates $M$, if the infimum is achieved by $\Omega$.

\medskip
\medskip

Case (j). $I(\partial\Omega)=0$ and $\tilde{I}^*=I(\partial\Omega)$. Now, observe that $I(\partial\Omega)$ cannot be zero because this means that $\mathcal{P}(\Omega)=0$, hence either  $\mathrm{vol}(\Omega)=0$ or $\mathrm{vol}(\Omega)=\mathrm{vol}(M)$. If $\mathrm{vol}(\Omega)=0$, then  $I(\partial\Omega) \neq 0$, because otherwise it would be $\mathrm{vol}(\Omega_i)\to 0$ and the assumption would imply that \[\tilde I^*=\lim_{i\to\infty}I(\partial\Omega_i)\ge C_3>0,\] which is impossible since by assumption it holds (ii). The same argument applies to $M\setminus\Omega$, if one assumes that $\mathrm{vol}(M)=\mathrm{vol}(\Omega)$. This is enough to ensure that $\tilde{I}^*>0$ in any dimension but it is not enough to ensure that $\tilde I^*$ is achieved.

\medskip
\medskip

Case (jj). $I(\partial\Omega)>0$ and $\tilde{I}^*=I(\partial\Omega)$. From Lemma \ref{algebraicchow1}, we note that there is superaddivity of the functional $I$ and so $\partial\Omega$ is separating (in any dimension). The result follows in this case.

\medskip
\medskip

Case (jjj). $I(\partial\Omega)=0$ and $\tilde{I}^*>I(\partial\Omega)$. As in case (j) we observe that $I(\partial\Omega)$ cannot be zero because this means that $\mathcal{P}(\Omega)=0$, hence either  $\mathrm{vol}(\Omega)=0$ or $\mathrm{vol}(\Omega)=\mathrm{vol}(M)$. If $\mathrm{vol}(\Omega)=0$, then  $I(\partial\Omega) \neq 0$, because otherwise it would be $\mathrm{vol}(\Omega_i)\to 0$ and it would imply that 
\[\tilde I^*=\lim_{i\to\infty}I(\partial\Omega_i)\ge\underset{V \rightarrow 0} \liminf \ \frac{{\tilde{I}_M(V)}^{n+1}}{V^n}\ge C_4>0,\] which is impossible since by assumption (jj) we have $\tilde{I}^*< C_4$. The same argument applies to $M\setminus\Omega$, if one assumes that $\mathrm{vol}(M)=\mathrm{vol}(\Omega)$. Observe that at this point by (j)-(jj)-(jjj) we already proved that $\tilde{I}^*>0$.

At this point the only case that it remains to treat is the following.

\medskip
\medskip

Case (jv). $0 < I (\partial \Omega) < \tilde{I}^*$.

In dimension $2$ there is nothing to do, unfortunately Case (jv) could happen as showed in Example \ref{newexample}. So we cannot prove the existence of a minimizer for $\tilde{I}$ but just that the infimum $\tilde{I}^*$ is positive, but not necessarily achieved. On the other hand when $n+1\ge3$ we can prove also that the the infimum $\tilde{I}^*$ is achieved. So in any dimension $\tilde{I}^*$ is positive, howevere it is achieved only if $n+1\ge3$.

\medskip
\medskip


In dimension greater or equal to $\ge3$, Propositions \ref{Prop:EquivalenceInHigherDimensions0}  and \ref{Prop:EquivalenceInHigherDimensions1} imply  $I^*=\tilde{I}^*$ and a fortiori  $I^*>0$. This means that we can now look at minimizing sequences $\Omega'_j$ for $\tilde I^*$ that are minimizing even  for $I^*$. Then $\Omega'_j$ could be chosen to be a sequence of isoperimetric regions, in order to deduce that the limit $\Omega'$ is an isoperimetric region with volume $0<\mathrm{vol}(\Omega')<\mathrm{vol}(M)$. Now $I(\partial\Omega')=I^*=\tilde{I}^*$. By the same argument of Lemma \ref{algebraicchow1} we show that $\Omega'$ is separating because this time $I(\partial\Omega')=I^*$. To prove the converse part of the statement of the theorem we proceed exactely as in the proof of Theorem \ref{ste61}.
\end{proof}

A similar argument applies to the following result.

\begin{thm} \label{ste4bis1} If  $M$ is  a Riemannian manifold of finite volume of dimension $n+1\ge 2$ satisfying  
\begin{itemize}\label{Eq:ste5bis01} 
\item[(i).]\(\underset{V \rightarrow 0} \liminf \ \frac{{\tilde{I}_M(V)}}{V} \ \geq C_4>0\);
\item[(ii).]\(\tilde{I}^\flat<C_4;\)
\end{itemize}
then $\tilde{I}^\flat>0$. In particular if $n+1\ge 3$, then $\tilde{I}^\sharp=I^\flat>0$ and is achieved by a connected isoperimetric region $\Omega$ of positive volume with connected complementary set $M\setminus\Omega$. Furthermore, if $n+1\le7$, then $\partial\Omega$ is smooth and separating.

Conversely, if  $n+1\ge3$, $M$ is noncompact and $\Omega \subseteq M$ is a connected unbounded finite perimeter set of positive volume such that $M\setminus\Omega$ is connected satisfying \eqref{Eq:Ste4Statement}, then $\Omega$ is an isoperimetric region and both $(i)$ and $(ii)$ hold.   
\end{thm}

\begin{proof}We may argue as in  the proof of Theorem \ref{ste5bis}, but instead of Corollary \ref{connectionchow} we use Lemma \ref{connection}. To prove the converse part of the statement of the theorem we proceed mutatis mutandis as in the proof of Theorem \ref{ste4}.
\end{proof}

It remains to show that $\partial\Omega$, which  has been constructed as in the previous  proof, turns out to be separating even in dimension two. We are unable for the moment to prove such a statement in  full generality considering the problem of minimizing over domains whose boundary have more than one connected component, because   the argument of Proposition \ref{Prop:EquivalenceInHigherDimensions0} does not work anymore in dimension two. However we have in the next theorem a condition that generalises a previous result obtained by Hamilton in \cite{hamilton4} for compact manifolds. Roughly speaking, if we change a little bit the minimization problem in dimension two looking to a smaller class of admissible sets, we then can prove the existence of a minimizer. The detailed technical statement is the theorem below.    

\begin{thm}\label{ste5bis11}Let  $M$ be  a Riemannian manifold of finite volume of dimension two and define the following restriction of $\tilde{I}_M(V)$\[\hat{I}_M(V):=\inf \{\mathcal{P}(\Omega) \ | \ \Omega \subseteq M, \  \mathrm{vol}(\Omega)=V,\partial \Omega=F(\mathbb{S}^1)\}, \ \]where the infimum is taken over the family of all Lipschitz simple loops \(F:\mathbb{S}^1\to M \)  with \(\partial \Omega \) separating \( M\) and \[\hat{I}^*_M(V)={(\hat{I}_M(V))}^2 \cdot {\left(\frac{1}{V}+\frac{1}{\mathrm{vol}(M)-V}\right)} ,\]\[\hat{I}^*= \inf \{\hat{I}^*_M(V) \ | \ V \  \in \ ]0, \mathrm{vol}(M)[ \  \}. \]  If the following conditions are satisfied  \begin{itemize}\item[(i).]\label{Eq:ste5bis1}$\underset{V \rightarrow 0} \liminf \ \frac{{\hat{I}_M(V)}^2}{V} \ \geq C_5>0;$ \item[(ii).] \label{Eq:ste5bis10} $\hat{I}^*< C_5;$
\item[(iii).] $C_5\le(c_2)^2=4\pi>0$, where $c_2$ is the Euclidean isoperimetric constant of dimension $2$.
\end{itemize}
then,  there exists a connected isoperimetric region $\Omega \subseteq M$ of positive volume with connected complementary set $M\setminus\Omega$ such that $I(\partial \Omega) = \hat{I}^*>0$ and $\Omega$ has  smooth embedded boundary, which separates $M$, has just one connected component and is a simple loop.

Conversely, if  $n+1=2$, $M$ is noncompact and $\Omega \subseteq M$ is a connected unbounded finite perimeter set of positive volume such that $M\setminus\Omega$ is connected satisfying $I(\partial \Omega) = \hat{I}^*>0$, then $\Omega$ is an isoperimetric region and both $(i)$ and $(ii)$ hold.
\end{thm}

\begin{proof} The proof goes exactly as Cases (j), (jj), (jjj) of the proof of Theorem \ref{ste5bis}.   By assumption (i),  we cannot have that for every compact set $K\subset\subset M$, there is an index $i_K>0$ such that $K\cap\partial\Omega_i=\emptyset$ for every $i\ge i_K$ because $\mathrm{vol}(M\setminus\Omega_i)$ is bounded from below away from zero. This means that (up to a subsequence renamed again $\partial\Omega_i$) there exists a sequence of points $x_i\in\partial\Omega_i$ and a compact set $K\subset\subset M$ such that $x_i\in K$ for every $i\in\mathbb{N}$.  Now, by definition $\partial\Omega_i$ is connected and we are considering a minimizing sequence, therefore the perimeter is bounded, that is,  the length $\mathcal{P}(\Omega_i)\le C^*$ is uniformly bounded by a positive constant $C^*>0$, $x_i\in\partial\Omega_i$  and so \[\partial\Omega_i\subseteq U_{C^*}(K):=\{x\in M \ : \ d(x,K)\le C^*\}\subset\subset K_1\subset\subset M,\] for every $i\in\mathbb{N}$ (observe that this fact is specific of the dimension $2$, and $d$ is the Riemannian metric on $M$). Since $U_{C^*}(K)$ is compact by the Hopf-Rinow-Heine-Borel Theorem (see \cite{chavel}) in complete Riemannian manifolds, we reduced the problem to the compact case. Now we can apply theorem Theorem \ref{ste61} with $C_2$ being any positive constant such that $C_2\le(c_2)^2=4\pi$ (compare Remark \ref{Rem:HamiltonGeneralized}), because of our assumption $(iii)$ and the fact that our compact $K_1$ contain a minimizing sequence of the original problem ensuring that $\hat{I}^*_{K_1}=\hat{I}^*_M$. Hence $\partial\Omega$ is smooth and embedded. A priori $\partial\Omega$ could have more than one connected component and not being separating, but this is not actually the case because by Ascoli-Arzelà's Theorem the isoperimetric region $\Omega$ and $\partial\Omega$ are the Hausdorff limit of a sequence of separating $\Omega_i$ with one connected component $\partial\Omega_i$, that is, $d_\mathrm{H}(\Omega_i,\Omega)\to0$, $d_\mathrm{H}(\partial\Omega_i,\partial\Omega)\to0$  when $i \to \infty$ and so also $d_\mathrm{H}(M\setminus\Omega_i,M\setminus\Omega)\to0$. 
\begin{figure}[h!]\label{Fig7}
 \includegraphics[scale=0.7]{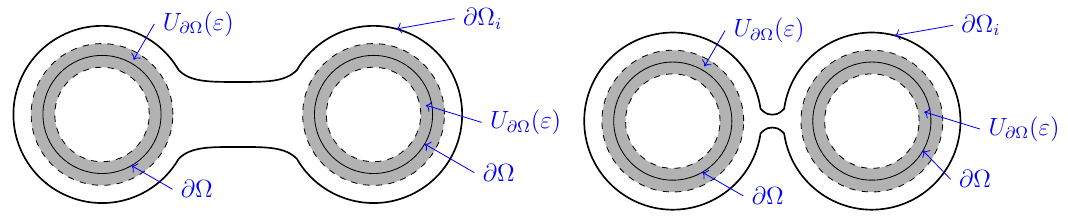}
    \caption{Two connected components of $\partial\Omega$. $U_{\partial\Omega}(\varepsilon):=\{x\in M: d(x,\partial\Omega)<\varepsilon\}$ there is still no loss of perimeter in the sequence $\Omega_i$.}
\end{figure}
\begin{figure}[h!]\label{Fig9}
 \includegraphics[scale=0.8]{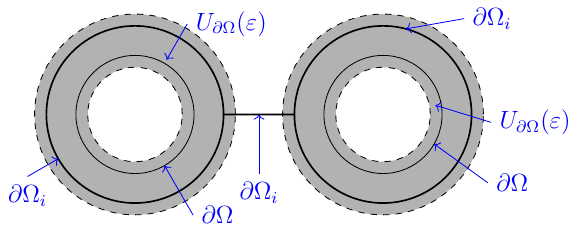}
    \caption{Two connected components of $\partial\Omega$, $\partial\Omega_i$ is not able to stay in a small tubular neighborhood of $\partial\Omega$ maintaining the property of being connected. Moreover, there is a loss of perimeter in the limit.}
\end{figure}
We omit the easy details needed to prove this last assertion. To prove the converse part of the statement of the theorem we proceed mutatis mutandis as in the proof of Theorem \ref{ste61}.

\end{proof}

Now we discuss the remaining functionals in dimension two.
\begin{thm}\label{ste5bis11Hsu}
Let  $M$ be  a Riemannian manifold of finite volume of dimension two and define the following restriction of $\tilde{I}_M(V)$\[\hat{I}_M(V):=\inf \{\mathcal{P}(\Omega) \ | \ \Omega \subseteq M, \  \mathrm{vol}(\Omega)=V,\partial \Omega=F(\mathbb{S}^1)\}, \ \]where the infimum is taken over the family of all Lipschitz simple loops \(F:\mathbb{S}^1\to M \)  with \(\partial \Omega \) separating \( M\),and
\[\hat{I}^\flat_M(V)=\hat{I}_M(V) \cdot \left(\frac{1}{V}+\frac{1}{\mathrm{vol}(M)-V}\right) ,\ \mbox{and} \ \hat{I}^\flat= \inf \{\hat{I}^\flat_M(V) \ | \ V \  \in \ ]0, \mathrm{vol}(M)[ \  \}. \]  Assume the following conditions are satisfied
\begin{itemize}
\item[(i).]$\underset{V \rightarrow 0} \liminf \ \frac{{\hat{I}_M(V)}}{V} \ \geq C_5>0;$
 \item[(ii).]  $\hat{I}^\flat< C_5;$
\end{itemize}
then, there exists a connected isoperimetric region $\Omega \subseteq M$ of positive volume with connected complementary set $M\setminus\Omega$ such that $C(\partial \Omega) = \hat{I}^\flat$ and $\Omega$ has smooth boundary, which separates $M$ and just one connected component and is a simple loop.

Conversely, if  $n+1=2$, $M$ is noncompact and $\Omega \subseteq M$ is a connected unbounded finite perimeter set of positive volume such that $M\setminus\Omega$ is connected satisfying $C(\partial \Omega) = \hat{I}^\flat$, then $\Omega$ is an isoperimetric region and both $(i)$ and $(ii)$ hold.
\end{thm}

\begin{proof} The proof goes exactly as Cases (j), (jj), (jjj) of the proof of Theorem \ref{ste5bis}.  Here we prove just what it is needed to complete the proof. By assumption (i),  we cannot have that for every compact set $K\subset\subset M$, there is an index $i_K>0$ such that $K\cap\partial\Omega_i=\emptyset$ for every $i\ge i_K$ because $\mathrm{vol}(M\setminus\Omega_i)$ is bounded below away from zero. This means that (up to a subsequence renamed again $\partial\Omega_i$) there exists a sequence of points $x_i\in\partial\Omega_i$ and a compact set $K\subset\subset M$ such that $x_i\in K$ for every $i\in\mathbb{N}$.  Now, by definition $\partial\Omega_i$ is connected and we are considering a minimizing sequence, therefore the perimeter is bounded, that is,  the length $\mathcal{P}(\Omega_i)\le C^*$ is uniformly bounded by a positive constant $C^*>0$, $x_i\in\partial\Omega_i$  and so \[\partial\Omega_i\subseteq U_{C^*}(K):=\{x\in M \ : \ d(x,K)\le C^*\}\subset\subset K_1\subset\subset M\] for every $i\in\mathbb{N}$ (observe that this fact is specific of the dimension $2$, and $d$ is the Riemannian metric on $M$). Since $U_{C^*}(K)$ is compact by the Hopf-Rinow-Heine-Borel Theorem (see \cite{chavel}) in complete Riemannian manifolds, we reduced the problem to the compact case. This case is indeed a special case of Theorem \ref{ste61}, in which $C_2$ could be any positive constant. Hence $\partial\Omega$ is smooth and embedded. A priori $\partial\Omega$ could have more than one connected component and not being separating, but this is not actually the case because by Ascoli-Arzelà's Theorem $\Omega$ and $\partial\Omega$ are the Hausdorff limit of a sequence of separating $\Omega_i$ with one connected component $\partial\Omega_i$, that is, $d_\mathrm{H}(\Omega_i,\Omega)\to0$, $d_\mathrm{H}(\partial\Omega_i,\partial\Omega)\to0$  when $i \to \infty$. We omit the easy details needed to prove this last assertion.
Thus easily the first part of theorem follows. To prove the converse part of the statement of the theorem we proceed mutatis mutandis as in the proof of Theorem \ref{ste4}.
\end{proof}




Compare now Theorems \ref{ste5bis}, \ref{ste4bis1},   and \ref{ste5bis11Hsu} with Remark \ref{Rem:ComparisonWithHsu}, in order to understand the degree of generalizations of the above results. For instance,  from Remark \ref{Rem:ComparisonWithHsu}, it is easy to check that under the assumptions of  Theorem \ref{hsu1}, the hypothesis of our Theorem \ref{ste4bis1}  are satisfied. Thus all the noncompact two dimensional manifolds treated in \cite{hamilton1}, \cite{hamilton2}, \cite{hsu}, \cite{delpino1}, \cite{delpino2}, fall under the scope of Theorem \ref{ste5bis11Hsu}.


Most of the ideas and of the arguments that we have seen in the present paper apply not only to the functionals studied by Hamilton and Daskalopulos  \cite{hamilton1, hamilton2, hamilton3, hamilton4} but might work for much more general functionals which satisfy prescribed conditions of superadditivity. Indeed we conjecture that:

\begin{conj} If we replace the functions \[(A,V)\mapsto A\left[\frac{1}{V}+\frac{1}{\mathrm{vol}(M)-V}\right],\ \mbox{and} \ (A,V)\mapsto A^{n+1}\left[\frac{1}{V}+\frac{1}{\mathrm{vol}(M)-V}\right]^n,\] with more general functions which are lower semicontinuous in the variable $A$, continuous in the variable $V$, and  satisfy a superadditivity condition as in Lemmas \ref{algebraic} and \ref{algebraicchow1}, then  statements analogous to that of Theorems \ref{ste5bis}, \ref{ste4bis1},  and \ref{ste5bis11Hsu} hold.
\end{conj}


\section{Some examples}

The difficulty of applying the argument of Theorem \ref{ste4} is due to the fact that  $\tilde{I}_M(V)$ may be or not continuous. 
 In the present section, we provide some examples in order to show different behaviours, when we test the condition (i) of Theorem \ref{ste4} on  complete Riemannian manifolds of finite volume. Of course, these behaviours depends on the metric which we are considering.
\begin{ex} The manifolds considered in \cite{hamilton1}, \cite{hamilton2}, \cite{hamilton3}, \cite{hamilton4}, \cite{hsu}.
\end{ex}
\begin{ex}\label{optimal}
The present example illustrates Theorem \ref{ste4}.  We take a rotationally symmetric surface $M \simeq \mathbb{R}^2$, that is, diffeomorphic to the usual plane (see \cite{ritore2}). Fix the origin $p \in M$ and a smooth metric $g$ which can be written in normal polar coordinates by $g=dt^2+{f(t)}^2d\theta^2$, where
$(t,\theta) \in \ ]0,\infty[ \times S^1$ and $S^1=\{(x,y)\in \mathbb{R}^2 \ | \ x^2+y^2=1\}$ is endowed with the Riemannian metric induced by that of the Euclidean plane. Here $d\theta$ is the Riemannian volume form of $S^1$ and $f : t \in ]0,\infty[ \mapsto f(t) \in ]0,\infty[$ is such that at the origin we can extend $g$ to a smooth metric on the entire plane.
This construction it is always possible for suitable $f$, see \cite[p.13]{petersen}. Now $M$ has  sectional curvature  \[K(t)=-\frac{f''(t)}{f(t)},\] 
(see \cite{petersen} for the details). Note that the length $l(c_t)$ of a geodesic circle $c_t$ depends on $f(t)$, namely $L(c_t)=2\pi  f(t)$. We require the following restrictions on $f(t)$: 

\medskip
\medskip

(j).   There is $t_1>0$ such that $f'(t)>0$ for $t<t_1$ and $f'(t)<0$ for $t>t_1$;

\medskip
\medskip

(jj).  $K'(t)\le 0$ for $t$ large enough;

\medskip
\medskip

(jjj). $\mathrm{vol}(M^2)=\int^\infty_0 2\pi f(r) dr < \infty$;

\medskip
\medskip

(jv). $V(t_0)=\int^\infty_{t_0} 2\pi f(r) dr < \infty$ is equal to $\mathrm{vol}(\Omega(t_0))$, where $t_0>0$ is fixed and $\Omega(t_0)=\{(t, \theta) \in \ ]0,\infty[ \times S^1 \ | \ t>t_0 \}$.

\medskip
\medskip

With the assumptions (j)--(jv), we obtain a plane with decreasing curvature as in \cite[Lemma 2.1, pp. 1104--1105]{ritore2} and apply \cite[Lemma 3.1]{morgan4}, in order to find that $I_M(V(t))=l(c_t)$. Historically, we mention that the above relation $I_M(V(t))=l(c_t)$ is found also in \cite{cao, pansu, topping}. Then  
\[\lim_{V(t) \rightarrow 0}   \frac{I_M(V(t))}{V(t)}=\lim_{t \rightarrow \infty}   \frac{L(c_t)}{V(t)}=\lim_{t \rightarrow \infty}   \frac{f(t)}{\int^\infty_t  f(r) dr}=\lim_{t \rightarrow \infty}   \frac{-f'(t)}{f(t)}.\]
Now if \[f(t)= \left\{\begin{array}{lcl} e^{-t},&\,\,& t\ge t_0,\\
h(t),&\,\,& t_1<t \le t_0,\\
\sin(t), &\,\,& t\in[0,t_1],\end{array}\right.\]
where $h : t \in [t_1,t_0] \mapsto h(t) \in \ ]0,\infty[ $ is a smooth function with $h'(t)<0$ (by Sturm-Liouville theory) , having $-\frac{h''}{h}$ nonincreasing, and such that the globally defined function $g$ is smooth, then (j) is satisfied, (jj) becomes $K(t)=-1$ for all $t >t_0$ and the integrals in (jjj) and (jv) are always well defined. Here the above limit becomes
\begin{equation}\label{Eq:Example5.2}
\lim_{V(t) \rightarrow 0}   \frac{I_M(V(t))}{V(t)}=\lim_{t \rightarrow \infty}   \frac{-g'(t)}{g(t)}=\lim_{t \rightarrow \infty}   -\frac{-e^{-t}}{e^{-t}}=1=C_1.
\end{equation}
Therefore the condition (i) of Theorem \ref{ste4} is true. It remains to check that also the condition (ii) of Theorem \ref{ste4} is true. Here we make suitable choices for $t_0$, $t_1$, and $h$ to be sure that there exists a $t\leq t_1$ such that 
\[I^\flat_M(\mathrm{vol}(B(p,t)))=L(c_t) \ \left(\frac{1}{\mathrm{vol}(B(p,t))}   + \frac{1}{ \mathrm{vol}(M)-\mathrm{vol}(B(p,t))} \right)<1,\]
for a geodesic ball $B(0,t)$ in $M$, centered at the origin and of radius $t$. With this aim in mind fix $t_0\geq\max\{\pi+3, -\log(\frac{\alpha_1}{2})\}$ where $\alpha_1=\sin(\bar{t})$ and $\frac{\pi}{2}<\bar{t}<\pi$ is such that $\frac{\sin(\bar{t})}{1-\cos(\bar{t})}=\frac{1}{2}$. Hence if $\bar{t}\leq t\leq\pi$, then $\frac{\sin(t)}{1-\cos(t)}<\frac{1}{2}$. Choose $t_1$ such that $\bar{t}\leq t_1<\pi$ and $1<\sin(t_1)e^{t_0}<2$. Writing the preceding expression for our concrete example for some $\frac{\pi}{2}\leq\bar{t}\leq t\leq t_1$ , yields
$$I^\flat_M(\mathrm{vol}(B(p,t)))\leq\sin(t)\left[\frac{1}{1-\cos(t)}+\frac{1}{e^{-t_0}+\alpha}\right],$$
where $\alpha:=\int_t^{t_0}g(t)dt$. But $\alpha\geq e^{-t_0}(t-t_0)$, because $h$ is chosen nonincreasing. Thus
\begin{eqnarray}
I^\flat_M(\mathrm{vol}(B(p,t))) & \leq & \sin(t)\left[\frac{1}{1-\cos(t)}+\frac{1}{e^{-t_0}(t_0-\pi+1)}\right]\\
& < & \frac{1}{2}+\frac{\sin(t)}{4e^{-t_0}}<\frac{1}{2}+\frac{1}{2}=1.
\end{eqnarray}

Now we invoke \cite[Lemma 3.1, B]{morgan4} because $M$ has nonincreasing sectional curvature and finite volume and note that the isoperimetric profile $I_M(V)$ is minimized over the balls of $M$ or over the complement of a ball. This allows us to conclude that
\[I^\flat=\inf\{I^\flat_M(V) \ | \ V \in \ ]0,\mathrm{vol}(M)[\}<1.\]
\end{ex}

Example \ref{optimal} may be modified in various ways.

\begin{ex}\label{optimalbis} 

In Example \ref{optimal},  we may consider a different function $g(t)$ satisfiying the conditions (j)--(jv), but not of exponential type for large values of $t$. Of course, the constant $C_1$ will be different and the limit $\lim_{V \rightarrow 0} I_M(V)/V$ will require a slight different solution, but it exists and we argue in the same way,   finding further families of examples for Theorem \ref{ste4}. 
\end{ex}

Now we discuss the sufficient and necessary conditions of Theorem \ref{ste4} via an example.

\begin{ex}\label{newexample} 

 Since we will deal with different metrics $g$ on different manifolds $M$, we stress this dependence of $I^\flat$ on $M$ and $g$ via the notation 
\begin{equation}\label{BF1}
I^\flat_{M,g}=\inf\left\{I_{M,g}(V) \cdot \left(\frac{1}{V}+\frac{1}{\mathrm{vol}_g(M)-V}\right) | \ V \in \ ]0,\mathrm{vol}_g(M)[ \right\}.\end{equation} Consider the two dimensional canonical sphere $S^2$ with the round metric $\rho$ and  $I^\flat_{S^2,\rho}$.

 We are going to construct a two dimensional  Riemannian manifold $(M,g_{t_0, l, t_1})$ with metric $g_{t_0, l, t_1}$ depending on three parameters only, introducing first the parameter $t_0$, then $l$ and finally $t_1>t_0$. This $(M,g_{t_0, l, t_1})$ turns out to have volume  $4\pi$ and nonincreasing sectional curvature $K(t)$ such that $-1 \le K(t) \le 1$ with $K(t)=1$ on $[0,t_0]$ and  $K(t)=-1$ on $[t_1, + \infty]$. Such an $(M,g_{t_0,t_1, l})$ will satisfy the necessary conditions of Theorem \ref{ste4}, but not the assumptions (i) and (ii) with $I^\flat=I^\flat_{S^2,\rho}$. 
\begin{figure}[H]\label{Fig10}
 \includegraphics[scale=1]{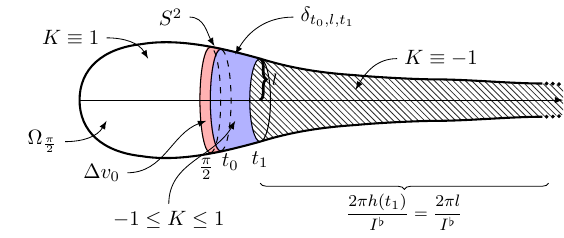}
    \caption{An example of a manifold $(M,g_{t_0, l, t_1})$ having volume  $4\pi$, nonincreasing sectional curvature $K(t)$ such that $-1 \le K(t) \le 1$ with $K(t)=1$ on $[0,t_0]$ and  $K(t)=-1$ on $[t_1, + \infty]$, that satisfy the necessary conditions of Theorem \ref{ste4}, but not the assumptions $(i)$ and $(ii)$ with $I^\flat=I^\flat_{S^2,\rho}$.}
\end{figure}

We begin to consider $\pi>t_0>\frac\pi2$ and $0<l<\sin(t_0)<1$ such that \begin{equation}\label{BF2}
t_0+\frac1l-\frac1{I^\flat}+\cos(t_0)<t_1<t_0+\frac1l<\pi.\end{equation}  The parameter $t_0$ will be chosen later. Observe that $I^\flat_{S^2,\rho}=1$. By an explicit calculation it is possible to show that $I^\flat_{S^2,\rho}$ is attained when $\Omega_t=\Omega_{\frac\pi2}$, i.e., when is equal to the half of the sphere. 
We define $f : t\in [0,+\infty[ \to f(t) \in [0,+\infty[$ as follows 
\begin{equation}\label{BF3}
f(t)= \left\{\begin{array}{lcl} le^{-I^\flat(t-t_1)},&\,\,& t\ge t_1,\\
\\
h_{t_0,l,t_1}(t),&\,\,& t_0<t \le t_1,\\
\\
\sin(t), &\,\,& t\in[0,t_0],\end{array}\right.
\end{equation}
where $h_{t_0,l,t_1} : t \in [t_0,t_1] \mapsto h_{t_0,l,t_1}(t) \in \ ]0,\infty[ $ is a smooth function with $h_{t_0,l,t_1}(t)>0$,  $h_{t_0,l,t_1}'(t)<0$, solving the  ordinary differential equation     
 \begin{equation}\label{varphi}
-\frac{h_{t_0,l,t_1}''}{h_{t_0,l,t_1}}=\varphi_{t_0,l,t_1},
\end{equation} with boundary conditions $h_{t_0,l,t_1}(t_0)=\sin(t_0)$,  $h_{t_0,l,t_1}(t_1)=l$, where   $\varphi_{l,t_1}(t)$ is a smooth nonincreasing function interpolating between $\varphi_{t_0,l,t_1}(t_0)=1$ and $\varphi_{t_0,l,t_1}(t_1)=-1$, with \begin{equation}\label{BF4}
\int_{t_0}^t\varphi_{t_0,l,t_1}(s)ds>0, \qquad \forall t_1\in ]t_0,2\pi[, \qquad \forall l\in]0,\sin(t_0)[, \qquad \forall t\in]t_0,t_1].\end{equation} Notice also that such a $\varphi_{t_0,l,t_1}$ always exists and $h_{t_0,l,t_1}(t)>0$.
 The volume is \begin{equation}\label{BF5}
\mathrm{vol}_g(M)=-\cos(t_0)+\delta_{t_0,l,t_1}+\frac{2\pi l}{I^\flat}\ge-\cos(t_0)+l(t_1-t_0)+\frac{2\pi l}{I^\flat}>2\pi,\end{equation} 
where
 \begin{equation}\label{BF6}
g:=g_{t_0,t_1, l} \ \ \mathrm{and} \ \
 \delta_{t_0,l,t_1}= 2\pi \ \int_{t_0}^{t_1} h_{t_0,l,t_1}(s)ds .\end{equation}Now choose $t_0$ and $t_1-t_0$ possibly smaller such that \begin{equation}\label{BF7}
\mathrm{vol}_g(M)=-\cos(t_0)+\delta_{t_0,l,t_1}+2\pi l=2\pi.\end{equation} 
 Such a choice is always possible because by the continuous dependence on the parameter of the solutions of second order ordinary differential equations and Lebesgue dominated convergence theorem, $\delta_{t_0,l,t_1}$ is a continuous function of the vector $({t_0,l,t_1})$.

 One can see that $(M,g_{t_0,l,t_1})=(M,g)$ has nonincreasing sectional curvature belonging to $[-1,1]$ by construction. Let $\Omega_t=\{(s,\theta)|0\le s\le t\}$ and use again \cite[Lemma 3.1, B]{morgan4}. We know that the absolute minimizers have to be found among the family of domains $\{\Omega_t\}_{t\in]0,+\infty[}$.  By the general Bol--Fiala inequality \cite[Equation 1.3]{BarbosadoCarmo}, we have that for any domain $\Omega\subset\subset M$ with smooth boundary (or any finite perimeter set, by approximation)
 \begin{equation}\label{Eq:Bol-FialaExemple5.4}
 A_g(\partial\Omega) \ge 4\pi\sqrt{\mathrm{vol}_g(\Omega)(4  \pi-\mathrm{vol}_g(\Omega))}.
 \end{equation}
 Thus the function 
 \begin{eqnarray}\label{BF8}
I_{M,g}^\flat(t) & :=  & I_{M,g}^\flat(\Omega_t)=2\pi f(t)\left[\frac1{2\pi\int_{0}^tf(s)ds}+\frac1{4\pi-2\pi\int_{0}^tf(s)ds}\right]\\ \nonumber
 & = &  A_g(\partial\Omega_t)\left[\frac1{\mathrm{vol}_g(\Omega_t)}+\frac1{4\pi-\mathrm{vol}_g(\Omega_t)}\right]\\ \nonumber
 & \ge & \frac{4\pi}{\sqrt{\mathrm{vol}_g(\Omega_t)(4\pi-\mathrm{vol}_g(\Omega_t))}}\ge 1=I_{S^2,\rho}^\flat= I^\flat, \nonumber
\end{eqnarray} 
 for all $ t\in]0,+\infty[,$ since the function $\tau\mapsto\frac{4\pi}{\sqrt{\tau(4\pi-\tau)}}$ has as minimum value $1$ and attains its unique minimum on the interval $]0,4\pi[$ at $\tau=2\pi$.

It is easy to check $I^\flat_{S^2,\rho}(\Omega_t)=I_{M,g}^\flat(\Omega_t)$ for every $t\in]0,\frac\pi2]$. Moreover $I_{M,g}^\flat$ is achieved because $I^\flat_{S^2,\rho}$ is achieved by some domain $\Omega_t$,  with $t\in ]0,\frac\pi2 ]$, namely $\Omega_{\frac\pi2}$. So the same domain $\Omega_t$ that achieves $I^\flat_{S^2, \rho}$ achieves also $I_{M,g}^\flat=I_{M,g}^\flat(\Omega_{\frac\pi2})$. On the other hand arguing as in  \eqref{Eq:Example5.2}  we may conclude that  $(M,g_{t_0,t_1, l})$ has a compact minimizer $I^\flat(\Omega_{\frac\pi2})=I^\flat$, $\Omega_{\frac\pi2}\subset\subset M$ and does not satisfy $(i)-(ii)$ of Theorem \ref{ste4}. 
\end{ex}

\medskip
\medskip

For the other functionals treated in this paper in dimension $2$ it is easy to adapt the construction of Example \ref{newexample} to obtain analogous examples showing the sharpness of our results.


\section*{Acknowledgements} The authors  thank Pierre Pansu for his valuable comments  that helped to improve the original results  of the present paper. Special thanks go to Luis Eduardo Osorio Acevedo for helping us with the figures that appear in the text and to the anonymous reviewer for relevant comments pointing out a mistake (that now we settled) in a previous version of this manuscript. The first author has been partially sponsored by Fapesp (2018/22938-4), and by CNPq (302717/2017-0), Brazil. The second author has been partially supported by PDJ of CNPq in the years 2013--2014 in Brazil, and by NRF  with grants No.CSRU180417322117, ITAL170904261537 in the years 2018--2020 in South Africa.

\end{document}